\documentclass[12pt]{amsart}
\usepackage{graphicx} 
\usepackage[utf8]{inputenc}
\usepackage{amsthm, tikz, tikz-cd, bbm} 
\usetikzlibrary{arrows}
\usepackage{amssymb, amsfonts}
\usepackage{amsmath}
\usepackage{pdfpages}
\usepackage{mathrsfs}
\usepackage{enumerate}
\usepackage[all]{xy}
\usepackage{xcolor}
\usepackage{tabularx}

\usepackage{hyperref}

\theoremstyle{plain}
\newtheorem{prop}{Proposition}
\newtheorem{theorem}[prop]{Theorem}
\newtheorem{lemma}[prop]{Lemma}
\newtheorem{corollary}[prop]{Corollary}
\newtheorem{definition}[prop]{Definition}

\newtheorem{remark}[prop]{Remark}
\newtheorem{fact}[prop]{Fact}
\newtheorem{example}[prop]{Example}

\newcommand{\ZZ}{\mathbb{Z}}
\newcommand{\spec}{\operatorname{Spec}}
\newcommand{\kbar}{\overline{k}}
\newcommand{\Xbar}{\overline{X}}
\newcommand{\PP}{\mathbb{P}}
\newcommand{\pic}{\mathrm{Pic}}
\newcommand{\aut}{\operatorname{Aut}}

\newcommand{\Gal}{\mathrm{Gal}}
\newcommand{\Frob}{\mathrm{Frob}}

\usepackage{lipsum}
\makeatletter
\renewcommand{\subsection}{\@startsection{subsection}{2}%
  \z@{.8\linespacing\@plus.2\linespacing}{.5\linespacing}%
  {\normalfont\bfseries}}
\makeatother

\title[Proportion of Conic Bundles That are Rational]{What are the odds that a conic bundle over a finite field is rational?}
\author{Amanda Hernandez}
\email{amanda\underline{ }hernandez@brown.edu}
\address{Department of Mathematics, Brown University, Providence, Rhode Island, USA}
\date{July 13, 2026}

\begin{document}

\maketitle

\begin{abstract}

  We study rationality of conic bundles  over finite fields defined over the line. In particular, for a fixed number $n$ of degenerate fibers, what is the proportion of good conic bundles that are rational over a finite field of order $q$? The Galois action on their Picard groups factors through the Weyl group $W(D_n)$, reducing rationality to a condition on signed permutation types. Using work of Colliot-Thélène and Yang together with Chebotarev density, we compute the asymptotic proportion of such conic bundles that are rational as $q$ goes to infinity.
\end{abstract}

\section{Introduction}
An algebraic variety $X$ over a field $k$ is said to be \textbf{rational}
if it is birationally equivalent over $k$ to projective space $\mathbb{P}^n_k$ 
for some $n$. Similarly, it is \textbf{stably rational} if $X \times \mathbb{P}^m$ is rational for some $m \ge 0$. The question of whether a given variety is rational remains one of the central questions in birational geometry.

In this paper we study smooth projective surfaces  $X$ with $
\pi : X \to \mathbb{P}^1$ with generic fiber a smooth conic.
Such a surface is called a \textbf{conic bundle}. We will call it \textbf{good} if all the fibers are smooth conics or conics with a single singularity. The geometry of conic bundles is largely related to the number $n$ of degenerate (non-smooth) fibers.

The primary theorem of the paper describes the asymptotic proportion of these good conic bundles defined over $\mathbb{F}_q$ as $q \rightarrow \infty$.

\begin{theorem}\label{thm:main}
    As $q \rightarrow \infty$ ($q$ odd), the proportion of good conic bundles defined over $\mathbb{F}_q$ with $n$ degenerate fibers that are rational is asymptotic to \[ \frac{\binom{n}{2}(2n-5)!!+ 2n\binom{n-1}{2}(2n-7)!! + (2n-1)!!}{2^{n-1}n!} .\] As $n \rightarrow \infty$, this proportion is asymptotic to $\frac{2.5}{\sqrt{\pi n}}$.
\end{theorem}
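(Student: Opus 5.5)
The proof has three steps: identify the Galois action with a conjugacy class in $W(D_n)$, translate rationality into a condition on that class, and count the classes satisfying it, using Chebotarev to turn counts of conic bundles into counts of group elements.

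\emph{Step 1: Frobenius as a signed permutation.} For a good conic bundle $\pi\colon X\to\mathbb{P}^1$ over $\mathbb{F}_q$ with $n$ degenerate fibers, Frobenius acts on $\mathrm{Pic}(X_{\bar{\mathbb{F}}_q})$. This action is determined by two pieces of data:
\begin{itemize}
\item the permutation of the $n$ degenerate points of $\mathbb{P}^1$;
\item for each cycle of that permutation, whether the two components of the degenerate fibers are swapped after going once around the cycle (a sign).
\end{itemize}
This gives an element of the hyperoctahedral group. The determinant (discriminant) constraint forces the number of negative cycles to be even, so the element lies in $W(D_n)$, of order $2^{n-1}n!$.

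\emph{Step 2: equidistribution.} Next I would show that the family of good conic bundles with $n$ degenerate fibers has full geometric monodromy $W(D_n)$. Chebotarev density for function fields (in the form of Katz--Sarnak equidistribution over $\mathbb{F}_q$ as $q\to\infty$) then says that, up to $O(q^{-1/2})$, the proportion of conic bundles whose Frobenius lies in a given conjugacy class $C$ equals $|C|/|W(D_n)|$. To check full monodromy, I would exhibit transpositions (collisions of degenerate points) and sign changes on pairs, which generate $W(D_n)$. I expect this monodromy computation, together with justifying that the point count is uniform in $q$, to be the main technical obstacle.

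\emph{Step 3: rationality criterion.} I will rely on Colliot-Thélène and Yang (via Iskovskikh's classification), in the following form. Fibers in positive cycles contain a Galois-stable component and can be contracted, while fibers in negative cycles cannot. So $X$ is rational if and only if the total length of the negative cycles is at most $3$, which is the same as saying the relatively minimal model has $K^2\ge 5$. Since the number of negative cycles is even, only three cycle types qualify:
\begin{itemize}
\item no negative cycles;
\item two negative fixed points;
\item one negative fixed point and one negative $2$-cycle.
\end{itemize}

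\emph{Step 4: counting.} The number of signed permutations of $m$ letters with all cycles positive has exponential generating function
\[
\exp\Big(\sum_{d\ge1}\tfrac{2^{d-1}(d-1)!\,x^d}{d!}\Big)=(1-2x)^{-1/2},
\]
so there are $(2m-1)!!$ of them. The three qualifying types then contribute as follows.
\begin{itemize}
\item No negative cycles: $(2n-1)!!$.
\item Two negative fixed points: choose the two points, then all-positive on the remaining $n-2$ letters, giving $\binom{n}{2}(2n-5)!!$.
\item One negative fixed point and one negative $2$-cycle: choose the point ($n$ ways), choose the pair ($\binom{n-1}{2}$ ways), choose the sign data of the negative $2$-cycle ($2$ ways), then all-positive on the remaining $n-3$ letters, giving $2n\binom{n-1}{2}(2n-7)!!$.
\end{itemize}
Dividing the sum by $2^{n-1}n!$ gives the stated proportion.

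\emph{Step 5: asymptotics.} By Stirling,
\[
\frac{(2n-1)!!}{2^{n-1}n!}=2\binom{2n}{n}4^{-n}\sim\frac{2}{\sqrt{\pi n}}.
\]
The ratios of the other two terms to $(2n-1)!!$ are $\binom{n}{2}/\big((2n-1)(2n-3)\big)\to 1/8$ and $2n\binom{n-1}{2}/\big((2n-1)(2n-3)(2n-5)\big)\to 1/8$. The proportion is therefore asymptotic to
\[
\frac{2}{\sqrt{\pi n}}\Big(1+\tfrac14\Big)=\frac{2.5}{\sqrt{\pi n}}.
\]
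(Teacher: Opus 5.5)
Your proposal is correct and follows essentially the same route as the paper. The shared steps are: the Frobenius class in $W(D_n)$; Chebotarev equidistribution from full $W(D_n)$ monodromy, which the paper cites from Hassett--Hernandez and applies via Meagher's and Ekedahl's versions; Yang's three signed cycle types, of which your ``total negative length at most $3$ with an even number of negative cycles'' is an equivalent repackaging; and the same three counts. The differences are only cosmetic: you count all-positive signed permutations with the generating function $(1-2x)^{-1/2}$ instead of the Stirling-number identity at $x=1/2$, and you spell out the $n\to\infty$ limit via $2\binom{2n}{n}4^{-n}$ and the two limiting ratios $1/8$.
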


Let $k$ be a finite field of characteristic $\neq 2$, and let
$X \to \mathbb{P}^1$ be a conic bundle defined over $k$ with
$n$ degenerate fibers. The absolute Galois group $\Gal ( \kbar / k)$ acts on
$\operatorname{Pic}X_{\kbar}$, and this action factors
through the Weyl group $W(D_n)$. As we will see below, the rationality of these conic bundles is determined by this action and can be checked via a cohomological condition. Thus the birational geometry of $X$ is encoded by a conjugacy class
in $W(D_n)$.

A fundamental result of Colliot-Thélène \cite{CTstablerat} shows that
over a finite field, stable rationality and rationality are equivalent
for smooth projective surfaces.
Moreover, stable rationality is implied by the vanishing of certain
Galois cohomology groups:
\[
H^1(\Gal(k'/k), \pic X) = 0
\]
for all finite extensions $k'/k$.
For conic bundles, this condition can be translated into a
cohomological condition on cyclic subgroups of $W(D_n)$.

In \cite{yang2024}, Yang studied the stable rationality of these surfaces over general non-closed fields. Looking specifically at cyclic subgroups, he found three types of actions that result in cohomology satisfying the necessary condition for stable rationality. Building on this work, which classifies the Weyl group elements satisfying this condition, we reduce the rationality problem to a purely combinatorial question: Which conjugacy classes of $W(D_n)$ satisfy this condition, and how frequently do they occur?

In his 2024 thesis, \cite{spaulding} studies a similar proportion for del Pezzo surfaces defined over a number field where, in that setting, $\pic \Xbar$ factors through $W(E_n)$. He assigns a probability density quantity to each subgroup of of $W(E_n)$.

Returning to the setting of a finite field $k$, we identify $\pic \Xbar$ with $\ZZ^{n+2}$ for conic bundles with $n$ degenerate fibers. Here we study the vanishing of the groups 
\[ \text{H}^1(\langle g^i \rangle , \ZZ^{n+2})\]
where $g \in W(D_n)$ and $1 \le i \le \text{ord}(g)$ for $i \vert \text{ord}(g)$. Using Yang's classification of actions satisfying the vanishing condition, we compute the probability of each type of action occurring for a conic bundle with $n$ degenerate fibers. 

We use a Chebotarev density result to establish equidistribution in an appropriate moduli space along with the classification of conjugacy classes in $W(D_n)$ to compute the asymptotic proportion of conic bundles over $\mathbb{F}_q$ with $n$ degenerate fibers that are rational as $q \to \infty$. In particular, in characteristic not 2, Theorem 1 tell us that the probability that a conic bundle with many
degenerate fibers is rational tends to zero at the rate $n^{-1/2}$.

Now there is some subtlety in the way that the order $q$  tends to infinity. One choice would be to fix a characteristic $p$ and take fields of order $q = p^r$ and let $r \rightarrow \infty$. The other would be to let the characteristics change and take increasingly large primes as the orders of our finite fields. We will demonstrate via different uses of the Chebotarev Density Theorem that the result holds in both cases.

\subsection*{Acknowledgements}
We are grateful to Anthony Várilly-Alvarado for conversations about this project. The author received partial support from U.S. Department of Education Graduate Assistance in Areas of National Need (GAANN) award P200A240015-25.

\section{Background}

\subsection{Conic Bundles}

We restrict our attention to a specific class of conic bundles, which we call good and balanced. These constructions are discussed in more detail in our upcoming article \cite{Hassett-Hernandez}.

Throughout, let $k$ be a field of characteristic not equal to $2$. 
\begin{definition}
A \textbf{good conic bundle} over $\PP^1$ consists of a 
smooth projective surface $X$ and a dominant morphism
$\phi: X \longrightarrow \PP^1$ such that
\begin{enumerate}
\item{the geometric generic fiber of $\phi$ is $\PP^1$}, and
\item{non-smooth fibers are isomorphic to two 
copies of $\PP^1$ meeting in a node.}
\end{enumerate}
\end{definition}

\begin{definition}
    Consider a good conic bundle $\phi:X \rightarrow \PP^1$ with
$$(\phi_*\omega_{\phi}^{-1})^{\vee} 
\simeq \mathcal{O}_{\PP^1}(a_0)  \oplus
\mathcal{O}_{\PP^1}(a_1) \oplus \mathcal{O}_{\PP^1}(a_2), \quad a_0 \le a_1 \le a_2.$$
We call the triple $(a_0,a_1,a_2) \in \mathbb{Z}^3$ the \textbf{splitting type} of the bundle. Moreover, we say the conic bundle is
\textbf{balanced} if $a_2-a_0 \le 1$.  
\end{definition}

Note that the balance condition is an open condition on conic bundles. In particular, if we restrict to good, balanced conic bundles we will be able to construct a sampling space that is of finite type when we look at the moduli space parameterizing such objects. 

Over algebraically closed fields, representative balanced good conic bundles may be obtained via blowing up
$\beta: X \longrightarrow \PP^1 \times \PP^1$. Examples of these constructions can be found in \cite{Hassett-Hernandez}.

The points of $\mathbb{P}^1$ over which the fiber is singular are called
\textbf{points of bad reduction}. We denote by $n$ the number of such
degenerate fibers. This number plays a central role in determining
the geometry of $X$, in particular the structure of its Picard group.

Let $\phi: X \longrightarrow \PP^1$ be a good conic bundle over an algebraically closed field with $n \ge 1$ degenerate fibers. Let $f$ be the class of a
fiber of $\phi$ and take $\Sigma$ to be a section of $\phi$. Write $E'_1,E''_1,\ldots,E'_n,E''_n$ for the irreducible components of the degenerate fibers so that
$$f \equiv E'_1 + E''_1 \equiv \cdots \equiv E'_n+E''_n;$$
we label so that
$$\Sigma \cdot E'_i=1, \Sigma \cdot E''_i=0, \quad i=1,\ldots,n.$$
We have
$$(E'_i)^2=(E''_i)^2=-1, \quad E'_i\cdot E''_i=1,$$
with the other intersection numbers between $\{E'_1,\ldots, E''_n\}$ equal to zero.  Note that $\{f,\Sigma,E''_1,\ldots,E''_n\}$ freely generates
$\pic X$ and 
$$K_X = -(r+2)f -2\Sigma + E''_1 + \cdots + E''_n,
\quad 
\Sigma^2=-r.$$

\begin{definition}
    Let $\phi: X \longrightarrow \PP^1$ be a good conic bundle with $n$ degenerate fibers. We say a collection of disjoint fibral curves $\{E_1', \ldots, E_n'\}$ is even if, after blowing down each $E_i'$ ,the ruled surface that results is a Hirzebruch surface $\bold{F}_r$ for some even $r$.
\end{definition}

Endowing this Picard group with the usual intersection form, we may examine the structure of the resulting group. This comes from a classical result \cite{SkorobogatovBIRS,HassettRSNF}) on intersections:
\begin{prop}[\cite{Hassett-Hernandez}, Proposition 15]
Consider $\pic X$ as a unimodular lattice under the intersection form.
For $n\ge 2$, the set
$$
R_n:=\left< K_X,f\right>^{\perp} \subset \pic X
$$
is isomorphic to the root lattice for $D_n$ and its extension
$$W_n:=\pic X /\left<K_X,f\right>$$
is isomorphic to the weight lattice.
The group
$$\{ g \in \aut(\pic X): g(f)=f, g(K_X)=K_X\}$$
equals the Weyl group $W(D_n)$.
\end{prop}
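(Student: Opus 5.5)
Our plan is to work with an explicit basis and compute. Write $\pic X\cong\ZZ^{n+2}$ with basis $\{f,\Sigma,E''_1,\ldots,E''_n\}$. The intersection form is given by
$$f^2=0,\quad f\cdot\Sigma=1,\quad \Sigma^2=-r,\quad f\cdot E''_i=0,\quad \Sigma\cdot E''_i=0,\quad E''_i\cdot E''_j=-\delta_{ij}.$$
Its Gram matrix has determinant $\pm1$, so the lattice is unimodular. We then write down $K_X$ and describe $\langle K_X,f\rangle^\perp$ directly. A class $D=af+b\Sigma+\sum c_iE''_i$ is orthogonal to $f$ exactly when $b=0$. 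Orthogonality to $K_X$ then reduces to a condition on $a$ and the $c_i$. Since $K_X\cdot E''_i=-1$ and $K_X\cdot f=-2$, this condition reads $-2a-\sum c_i=0$, that is, $\sum c_i=-2a$.

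This identifies
$$R_n=\Big\{af+\sum c_iE''_i \;:\; \sum c_i=-2a\Big\}\cong\{c\in\ZZ^n:\textstyle\sum c_i\ \text{even}\},$$
with the form restricted to $-\sum c_i^2$. Up to sign, this is the standard $D_n$ root lattice. The roots, i.e.\ the vectors of norm $-2$, are $\pm E''_i\pm E''_j$ adjusted by suitable multiples of $f$. Concretely, $E''_i-E''_j$ and $E''_i+E''_j-f$ (equivalently $E''_i-E'_j$) both lie in $R_n$. For the quotient $W_n=\pic X/\langle K_X,f\rangle$, we use unimodularity: the map $\pic X\to R_n^\vee$, $D\mapsto (\,\cdot\,D)|_{R_n}$, is surjective with kernel $R_n^\perp$. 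We will check that $R_n^\perp$ is the saturation of $\langle K_X,f\rangle$ and that this sublattice is already saturated. The lattice $\langle K_X,f\rangle$ contains $f$ and $K_X$, and the latter has coefficient $-2$ on $\Sigma$. So we must check carefully whether $\Sigma$ plus something lies in the span. Since the $E''_i$ coefficients of $K_X$ are $1$, the span is saturated. Hence $W_n\cong R_n^\vee$, which is the weight lattice $\ZZ^n\cup(\ZZ^n+\tfrac12\mathbf 1)$.

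For the automorphism statement, any $g$ fixing $f$ and $K_X$ preserves $R_n$ and acts on it by an isometry. Conversely, an isometry of $R_n$ extends uniquely to $\pic X$ fixing $f$ and $K_X$, provided it preserves the glue data $R_n^\vee/R_n$ compatibly. Unimodularity shows $\pic X$ is an overlattice of $R_n\oplus\langle K_X,f\rangle$, and the extension condition is that the induced action on the discriminant group $R_n^\vee/R_n$ is trivial. So the group in question is the subgroup of $O(D_n)$ acting trivially on the discriminant group. This group is $W(D_n)$ for $n\ne4$, since $O(D_n)=W(D_n)\rtimes\ZZ/2$ with the extra $\ZZ/2$ acting nontrivially on $R_n^\vee/R_n$. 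For $n=4$ the triality automorphisms also act nontrivially on the discriminant, so the statement still holds. A cleaner route is to show that the stabilizer is generated by reflections in the roots $E''_i-E''_j$ and $E''_i+E''_j-f$. These realize permutations of the $E''_i$ and even sign changes, i.e.\ swapping $E'_i\leftrightarrow E''_i$ an even number of times.

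The main obstacle is exactly this last step: ruling out odd sign changes. Swapping a single pair $E'_i\leftrightarrow E''_i$ is an isometry of $R_n$, so we must show it cannot be extended to $\pic X$ while fixing $K_X$. The extension would have to send $\Sigma$ to a class $\Sigma'$ with $\Sigma'\cdot f=1$ and with the correct pairings with $R_n$. We will show that such a $\Sigma'$ forces a half-integral coefficient, which is a parity obstruction. This is the same phenomenon as the evenness of $r$ in the definition of an even collection.
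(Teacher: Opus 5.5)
The paper does not prove this proposition; it quotes it from \cite{Hassett-Hernandez} and points to the classical references, so there is no in-paper argument to compare against. Your direct lattice computation is a correct, self-contained verification of all three claims.

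\emph{The sublattice and quotient.} The description $R_n=\{af+\sum c_iE''_i:\sum c_i=-2a\}\cong D_n(-1)$ is right, and so is $W_n\cong R_n^\vee$. Two small tightenings:
\begin{enumerate}
\item Surjectivity of $\pic X\to R_n^\vee$ uses primitivity of $R_n$ as well as unimodularity. Primitivity is automatic, since $R_n$ is an orthogonal complement.
\item The saturation step is cleaner stated as follows. If $m$ divides $xK_X+yf$, the $E''_1$-coefficient forces $m\mid x$. Then $m\mid y$ because $f$ is primitive.
\end{enumerate}

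\emph{The group.} Your discriminant-group argument is complete for every $n\ge 2$:
\begin{enumerate}
\item For $n=2,3$ you still have $O(D_n)=W(B_n)$, since $D_2=A_1^2$ and $D_3=A_3$.
\item For $n=4$, the diagram-automorphism $S_3$ permutes the three nonzero classes of $R_4^\vee/R_4$ faithfully. Hence the kernel of $O(R_4)\to\aut(R_4^\vee/R_4)$ is exactly $W(D_4)$.
\end{enumerate}

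\emph{The ``cleaner route''.} This is the same obstruction written in coordinates. Since $\Sigma\cdot(af+\sum c_iE''_i)=a=-\tfrac12\sum c_i$, the class $\Sigma$ maps to the spinor class $\tfrac12\mathbf 1\in R_n^\vee$. An odd sign change at $i$ moves this to $\tfrac12\mathbf 1-e_i$, which is a different class modulo $R_n$. Accordingly, the unique rational extension sends $\Sigma\mapsto\Sigma-E''_i+\tfrac12 f$, which is exactly the half-integral coefficient you predicted. I checked this for all $n\ge 2$ using the roots $E''_j\pm E''_k$ (adjusted by $f$) together with $K_X$.

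Be aware that this route by itself only excludes the nontrivial coset of $W(D_n)$ inside $W(B_n)$. It therefore relies on $O(R_n)=W(B_n)$, which fails at $n=4$. There you still need the triality part of the discriminant argument, so keep that as your main proof.
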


Thus, in order to understand the rationality of conic bundles over a fixed finite field $k$, we will examine the group structure of Weyl groups of type $D_n$. In particular, we are interested in the conjugacy class structure in order to study the cohomological results derived from \cite{CTstablerat}.

\subsection{Conjugacy Classes of the Weyl Group of Type D}

In Proposition~2.2 we saw that the orthogonal complement
\[
R_n := \langle K_X, f \rangle^\perp \subset \operatorname{Pic}(X)
\]
is isomorphic to the root lattice of type $D_n$, and that the
group of automorphisms of $\pic X$ fixing $f$
and $K_X$ is naturally identified with the Weyl group $W(D_n)$.
Thus the Galois action on $\pic \Xbar$
factors through $W(D_n)$.

To understand rationality via this Galois action, we therefore
require an explicit description of the conjugacy classes of $W(D_n)$.
We begin by recalling the structure of the Weyl groups of type $B_n$
and $D_n$, which may be described in terms of signed permutations.

The Weyl group of type $B_n$ is the group of all signed permutations
on $\{1, \dots, n\}$. Concretely, it consists of permutations of the
set $\{\pm 1, \dots, \pm n\}$ such that $g(-i) = -g(i)$ for all $i$.
It admits the description
\[
W(B_n) \cong S_n \ltimes (\mathbb{Z}/2\mathbb{Z})^n,
\]
Here, the symmetric group acts on the indices $\{ j^+, j^-\}$ by sending $j^\pm$ to $\sigma(j)^\pm$ and $c_j$ interchanges $j^+$ and $j^-$ and fixes all other indices. Intuitively, the permutation portion records how the irreducible
components of degenerate fibers are permuted, while the sign
records whether the two components of a given fiber $i$ are exchanged.
Thus the signed cycle type encodes the essential combinatorial
data of the Galois action on $\pic X$.

A signed permutation cycle $\alpha$ in this group takes the form $\alpha = c_{t_1} \cdots c_{t_s} \tau$ where $\tau \in S_n$ and the $c_{t_i}$ are all indices in $\tau$. General signed permutations are products of such cycles. 

We define $W(D_n)$ as the index two subgroup 
consisting of elements with an even number of sign changes.
Equivalently, it may defined as the kernel of the character $\sigma: W(B_n) \longrightarrow \{\pm 1\}$ defined by  \[\sigma(c_{t_1} \cdots c_{t_s} \tau) = (-1)^s.\] Thus, a signed permutation in $W(D_n)$ is the product of signed cycles $c_{t_1} \cdots c_{t_s} \tau$ with $s$ even. We define the projection map onto $S_n$
\[ pr: W(B_n) \longrightarrow S_n\]
by $pr(c_{t_1} \cdots c_{t_s} \tau) = \tau$. 

 Observe that it is always possible to write a signed permutation in $W(B_n)$ as a product of disjoint signed cycles $c_{t_1} \cdots c_{t_s} \tau$ where the $c_{t_i}$ only appear in their respective $\tau$ cycle, where we do allow $\tau$ to be trivial. Note that the total number of $c_{j}$ terms that appear in this expression must be even for the element to belong to $W(D_n)$.
 
 The order of $W(D_n)$ is
\[
|W(D_n)| = 2^{n-1} n!,
\]
a fact that will be used later when computing probabilities.

To understand the conjugacy class structure of $W(D_n)$, one first considers the conjugacy class structure of $W(B_n)$, the group of all signed permutations.

Associated to each permutation in $W(B_n)$ is its \emph{signed cycle type}, a sequence of the lengths of each cycle in its unique representation as a product of disjoint signed cycles where each length is assigned a positive sign or a negative sign depending on whether each cycle's character is positive or negative. In this group, two signed permutations are in the same conjugacy class if and only if they have the same signed cycle type.

Moreover, we record a fact from Carter about when two elements of $W(D_n)$ lie in the same conjugacy class.
\begin{prop}{\cite[Proposition 25]{Carter}}
An element of $W(B_\ell)$ lies in $W(D_\ell)$ if and only if it has an even number of negative terms in its signed cycle type. Two elements of $W(D_\ell)$ are conjugate if and only if they have the same cycle type, except that if all the cycles are even and positive there are two conjugacy classes.
\end{prop}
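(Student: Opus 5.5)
The statement to prove is Carter's Proposition 25, quoted above. My plan is to derive it directly from the conjugacy classification in $W(B_\ell)$, where classes are indexed by signed cycle types.

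\textbf{Membership in $W(D_\ell)$.} Write an element as a product of disjoint signed cycles $c_{t_1}\cdots c_{t_s}\tau$. Its character $\sigma$ is the product over the cycles of $(-1)^{s}$, where $s$ counts the sign changes inside that cycle. The cycles with $s$ odd are exactly the negative terms of the signed cycle type. Hence $\sigma=1$ precisely when the number of negative terms is even, which is the first claim.

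\textbf{Splitting of classes.} Let $C$ be a $W(B_\ell)$-class contained in $W(D_\ell)$, and fix $g\in C$. Since $W(D_\ell)$ has index $2$, the set $C$ is either a single $W(D_\ell)$-class or splits into two of equal size. It stays a single class exactly when the centralizer $Z_{W(B_\ell)}(g)$ is not contained in $W(D_\ell)$, i.e. when some element commuting with $g$ has an odd number of sign changes. So the first-claim and ``one class'' parts reduce to exhibiting such an element in every case except the exceptional one.

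\textbf{Centralizer elements outside $W(D_\ell)$.} I will produce them case by case.
\begin{itemize}
\item If $g$ has a cycle of odd length $m$ (positive or negative), take $z$ to be the product of all $m$ sign changes $c_j$ on that cycle's support. This $z$ acts as $-1$ on that block, so it commutes with $g$, and $\sigma(z)=(-1)^m=-1$.
\item If $g$ has a negative cycle $\gamma$ (of any length), then $\gamma$ commutes with $g$, and $\sigma(\gamma)=-1$ by definition of a negative cycle.
\end{itemize}
Hence if $g$ has any odd cycle or any negative cycle, its $B$-class does not split.

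\textbf{The exceptional case.} Suppose all cycles of $g$ are even and positive. After conjugating, $g$ is a product of positive cycles of even lengths $m_1,\dots,m_r$. I will compute its centralizer in $W(B_\ell)$. It is generated by:
\begin{itemize}
\item within each cycle, the powers of that cycle and $-1$ on the block; and
\item permutations of equal-length cycles, realized by block-preserving signed permutations that can be chosen unsigned.
\end{itemize}
A positive cycle of length $m$ has centralizer in the hyperoctahedral group of its block generated by the cycle and by $-1$. Now check each generator:
\begin{itemize}
\item The cycle has $\sigma=1$.
\item $-1$ on a block of even length $m$ has $\sigma=(-1)^m=1$.
\item Swapping two equal cycles by an unsigned permutation has $\sigma=1$.
\end{itemize}
So the centralizer lies in $W(D_\ell)$, and the class splits into two. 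This gives the exception.

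\textbf{Main obstacle.} The step needing the most care is the centralizer computation in the exceptional case. Specifically, I must verify that the centralizer of a single positive $m$-cycle within $W(B_m)$ is exactly $\langle \text{cycle}, -1\rangle$. I will check this by noting that a commuting signed permutation must preserve the cycle's orbit structure on $\{\pm1,\dots,\pm m\}$. The orbit of a positive cycle on these $2m$ points is two $m$-cycles, swapped by $-1$. A commuting element either preserves both orbits, hence acts as a power of the cycle, or swaps them, hence acts as $-1$ times a power of the cycle.

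The counting of equal-length blocks also needs attention: the permutation group among blocks is generated by unsigned block swaps, which lie in $W(D_\ell)$.
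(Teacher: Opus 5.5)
\textbf{Verdict.} Your proof is correct. The paper gives no proof of this statement; it quotes it from Carter and uses it as a black box, so there is no argument in the paper to compare yours against. What you have written is a complete, self-contained index-two centralizer argument, and each step checks out:
\begin{itemize}
\item the parity computation for $\sigma$;
\item the criterion that a $W(B_\ell)$-class contained in $W(D_\ell)$ splits exactly when $Z_{W(B_\ell)}(g)\subseteq W(D_\ell)$;
\item the two explicit centralizing elements with $\sigma=-1$;
\item the computation $Z_{W(B_m)}(\gamma)=\langle\gamma\rangle\times\langle -1\rangle$ for a positive $m$-cycle $\gamma$, via its two $m$-orbits on $\{\pm1,\dots,\pm m\}$.
\end{itemize}

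\textbf{Points to tighten.}
\begin{itemize}
\item You are right to read ``cycle type'' as \emph{signed} cycle type, as Carter does. Read literally, the statement as printed in the paper is false: in $W(D_2)$ the identity and $c_1c_2=-1$ have the same unsigned cycle type but are not conjugate, since $-1$ is central.
\item In the exceptional case, say explicitly that you conjugate $g$ inside $W(B_\ell)$ to an \emph{unsigned} permutation. This is possible because every positive signed cycle is $W(B_\ell)$-conjugate to its underlying cycle. It is harmless because $W(D_\ell)$ is normal, so $Z(xgx^{-1})=xZ(g)x^{-1}$ lies in $W(D_\ell)$ iff $Z(g)$ does. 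Without this step, your claim that block swaps ``can be chosen unsigned'' fails. For $g=c_1c_2(12)(34)$, $g$ acts as $-(12)$ on the first block, so any unsigned permutation exchanging the blocks $\{1,2\}$ and $\{3,4\}$ conjugates it to $-(34)\neq(34)$ on the second block. Hence no unsigned block swap commutes with $g$, although the signed one that does still has $\sigma=1$.
\item Your reduction sentence should refer to the ``if'' direction of the conjugacy claim, not the ``first claim.'' You should also record the ``only if'' direction: it is immediate, since conjugacy in $W(D_\ell)$ implies conjugacy in $W(B_\ell)$.
\end{itemize}
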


\begin{example}
    Let $n=4$. Representatives of the conjugacy classes of $W(D_4)$ may be chosen as follows:
\begin{align*}
&(1234), \quad c_1c_2(1234), \\
&(123)(4), \quad c_1c_2(123)(4), \\
&(12)(34), \quad c_1c_2(12)(34), \quad c_1c_3(12)(34), \\
&(12)(3)(4), \quad c_1c_2(12)(3)(4), \quad c_1c_3(12)(3)(4), \\
&(1)(2)(3)(4), \quad c_1c_2(1)(2)(3)(4), \quad
c_1c_2c_3c_4(1)(2)(3)(4).
\end{align*}
\end{example}

Since the cycles and $c_j$ terms represent how the Galois action permutes the irreducible components of the degenerate fibers, these conjugacy class types will play a major role in rationality of the respective conic bundle. In particular,
negative cycles correspond to fibers whose components are interchanged
by the Galois action. In the next section, we will see how these negative cycles are crucial in classifying which action types are permitted for rational conic bundles.

\subsection{Rationality Criteria}
We now explain how rationality of conic bundles over finite fields
may be detected via Galois cohomology, and how this reduces the
problem of interest to a condition on cyclic subgroups of $W(D_n)$.

We recall the $k$-birational classification of smooth, projective, geometrically rational $k$-surfaces. 
\begin{theorem}{\cite[Theorem 1]{iskovskih1979}}
    Let $k$ be a field and $X$ a smooth, projective, geometrically rational surface over $k$. Then there exists a $k$-birational morphism $X \longrightarrow Y$  where $Y$ is a smooth, projective $k$-minimal surface of one of the following types: \begin{enumerate}
        \item a relatively minimal conic bundle over a smooth conic
        \item a del Pezzo of degree $d$ with $1 \leq d \leq 9$.
    \end{enumerate}
\end{theorem}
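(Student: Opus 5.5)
The plan is to run the two-dimensional minimal model program over $k$, keeping track of the Galois action throughout, and then classify the possible end results by the type of extremal contraction. The case distinction is governed by the Picard rank $\rho(Y)=\operatorname{rk}\pic Y$ of the minimal model.

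\emph{Step 1: reaching a $k$-minimal model.} Starting from $X$, I look for a $\Gal(\kbar/k)$-orbit of pairwise disjoint $(-1)$-curves on $\Xbar$. Such an orbit is defined over $k$. By Castelnuovo's criterion applied over $\kbar$, together with Galois descent for the contraction, it can be blown down to a smooth projective $k$-surface. Each such contraction lowers $\rho(X)$, which is a finite number. So after finitely many steps I reach a $k$-birational morphism $X\to Y$, where $Y$ has no Galois-stable set of disjoint $(-1)$-curves, i.e.\ $Y$ is $k$-minimal. Since $Y$ is still geometrically rational, $K_Y$ is not nef: for example, $H^0(\Xbar, mK)=0$ for all $m>0$ forces this on a rational surface.

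\emph{Step 2: the extremal contraction.} I apply Mori's cone theorem to $Y$ over $k$, working in $N_1(Y)=N_1(\overline{Y})^{\Gal}$. There is a $K_Y$-negative extremal ray $R$, and its contraction $\mathrm{cont}_R\colon Y\to Z$ exists over $k$. I then split according to $\dim Z$.
\begin{enumerate}
\item If $\dim Z=2$, then $R$ is spanned by a Galois orbit of disjoint $(-1)$-curves. This contradicts minimality, so the case does not occur.
\item If $\dim Z=1$, then $\rho(Y)=2$ and $Y\to Z$ has geometric fibers that are conics. The base $Z$ is a smooth curve over $k$ with $\overline{Z}\cong\PP^1$, because $\overline{Y}$ is rational. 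Hence $Z$ is a smooth conic, and $Y$ is a relatively minimal conic bundle over it.
\item If $\dim Z=0$, then $\rho(Y)=1$. Since $-K_Y$ is positive on the ray, $-K_Y$ is ample and $Y$ is a del Pezzo surface.
\end{enumerate}

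\emph{Step 3: the degree bound.} In the del Pezzo case, Noether's formula on the rational surface $\overline{Y}$ gives $K_Y^2=10-\rho(\overline{Y})\le 9$. Ampleness of $-K_Y$ gives $K_Y^2\ge 1$. Thus $d=K_Y^2\in\{1,\dots,9\}$.

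\emph{Main obstacle.} The delicate step is making the cone theorem and the contraction work over a non-closed (possibly imperfect) field $k$. Concretely, one must show that a $K$-negative extremal ray of $\overline{NE}(Y)$ with $\dim Z=2$ really is spanned by a Galois orbit of \emph{disjoint} $(-1)$-curves. My first approach would be to handle this by descent from $\kbar$. Take a $K$-negative extremal ray $R$ on $Y$. Its preimage in $\overline{NE}(\overline{Y})$ is spanned by a Galois orbit of $K$-negative extremal curves on $\overline{Y}$. In the birational case these are $(-1)$-curves. If two curves in the orbit met, their sum would have nonnegative self-intersection, and then the ray could not be contracted to a point of a surface. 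This gives the required disjointness, after which Castelnuovo over $\kbar$ plus descent finishes the argument.
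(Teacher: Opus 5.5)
The paper does not prove this statement. It quotes it from Iskovskikh and uses it only to justify blowing down Galois orbits of disjoint exceptional curves until a minimal model is reached, so there is no internal argument to compare against. Judged on its own, your proposal is the standard extremal-ray proof and is correct in outline: the cone and contraction theorems for smooth projective surfaces over an arbitrary field, followed by the trichotomy $\dim Z=2,1,0$. Compared with the bare citation, it shows where $k$-minimality is used. It also gives the sharper conclusions $\rho(Y)=2$ for the conic bundle and $\rho(Y)=1$ for the del Pezzo. It does not remove the reliance on a substantial black box, though: the cone and contraction theorems over a non-closed, possibly imperfect, field carry most of the weight.

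Several steps need tightening.

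\emph{The extremal face.} The preimage of $R$ under the averaging map $N_1(\overline{Y})\to N_1(Y)$ is a Galois-stable $K$-negative extremal face $F$ of $\overline{NE}(\overline{Y})$, not a single orbit. It may a priori contain several orbits of extremal rays. That is harmless: you only need one $(-1)$-curve in $F$, since its whole orbit then lies in $F$.

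\emph{Why the rays are $(-1)$-curves.} Every extremal ray in $F$ is a $(-1)$-curve, rather than a ruling or a line in $\PP^2$. This holds because $\mathrm{cont}_R\times_k\kbar$ contracts exactly the curves with class in $F$ and is birational onto a surface.

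\emph{Disjointness.} Phrase this via Hodge index. Let $H$ be the pullback of an ample class from $\overline{Z}$. Then $H\cdot E_i=0$ and $H^2>0$, so $(E_1+E_2)^2<0$. This contradicts $(E_1+E_2)^2=-2+2E_1\cdot E_2\ge 0$ whenever $E_1\cap E_2\neq\emptyset$.

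\emph{Imperfect $k$.} Descent along $\Gal(k^{sep}/k)$, in both Step 1 and the descent paragraph, requires the $(-1)$-curves to be defined over $k^{sep}$. This holds because $H^0(E,N_{E/\overline{Y}})=H^0(\PP^1,\mathcal{O}(-1))=0$, which makes $[E]$ an \'etale point of the Hilbert scheme. Likewise, smoothness of $Z$ over $k$ needs a line of proof: flat base change of $f_*\mathcal{O}_Y=\mathcal{O}_Z$ shows $\overline{Z}$ is normal, hence smooth.

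\emph{$K_Y$ not nef.} The quickest argument is to take on $\overline{Y}$ the strict transform of a general line or ruling of a minimal model of $\overline{Y}$; it has $K\cdot C\in\{-3,-2\}$. Vanishing of plurigenera alone needs an extra Riemann--Roch argument.
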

Starting with a smooth, projective, geometrically rational surface, we can blow down Galois orbits consisting of disjoint exceptional curves to achieve one of the above two types of minimal models. Thus, to study rationality of geometrically rational surfaces, it suffices to analyze these two minimal types.

One can produce non-minimal conic bundles from relatively
minimal ones by blowing up points on smooth fibers.
If
\[
\phi:X \longrightarrow \mathbb P^1
\]
has $n$ degenerate fibers and the Frobenius action is encoded via an element $g \in W(D_n)$, then blowing up $m$ points on smooth
fibers or a length $m$ subscheme with at most one point in each fiber produces a good conic bundle
\[
\Tilde{\phi}: \Tilde{X} \longrightarrow \mathbb P^1
\]
with $n+m$ degenerate fibers. 

Looking at this from a group theoretic lens, we have tacked on a positive $m$-cycle with that is disjoint from the original $n$ fibers. Thus the Frobenius element for the resulting conic bundle is given by $h = g \cdot \tau \in W(D_{n+m})$ where $\tau$ is a positive cycle in the variables $\{n+1,...,n+m\}$ (i.e.\,it's only cyclically permuting the newly obtained fibers, but not interchanging the components). The resulting Picard group is constructed as the direct sum of the original Picard group and a permutation module. Thus, the cohomology remains unchanged. 

\begin{theorem}{\cite{CTstablerat}}
    Let $k$ be a field and $X$ a smooth, projective, geometrically rational $k$-surface.  Let $\Xbar = X \times_k \overline{k}$ and $\overline{k}$ be the algebraic closure of $k$. Assume $X(k) \neq 0$ and $X$ is split by a cyclic extension of $k$. If $X$ is not $k$-rational, then there exists a finite field extension $k'/k$ such that
    \[ H^1(\Gal(k'/k), \pic \Xbar) \neq 0\] and the k-variety X is not stably k-rational.
\end{theorem}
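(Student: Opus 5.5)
Since this is Colliot-Thélène's theorem, the plan follows his argument for surfaces over a field whose splitting field is cyclic. Let $G=\Gal(K/k)$ be the cyclic group through which the Galois action on $\pic \Xbar$ factors, with $K/k$ a cyclic splitting field. We prove the contrapositive. Assume $X(k)\neq\emptyset$ and
\[ H^1(\Gal(k'/k),\pic \Xbar)=0 \]
for every finite extension $k'/k$ inside $K$; then $H^1(H,\pic\Xbar)=0$ for every subgroup $H\subseteq G$. We want to show that $X$ is $k$-rational, and a fortiori stably $k$-rational.

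\textbf{Step 1 (reduction to minimal models).} Apply Theorem~\cite[Theorem 1]{iskovskih1979} to obtain a $k$-birational morphism $X\to Y$ with $Y$ $k$-minimal. Since $X\to Y$ contracts Galois orbits of disjoint exceptional curves, $\pic\Xbar\cong \pic\overline{Y}\oplus P$ with $P$ a permutation module. Permutation modules have vanishing $H^1$ for every subgroup, so the vanishing hypothesis passes to $Y$. Also $Y(k)\neq\emptyset$, and $Y$ is still split by $K$. It therefore suffices to treat minimal $Y$ of the two types.

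\textbf{Step 2 (del Pezzo case).} Minimal del Pezzo surfaces with a rational point and degree $d\ge5$ are rational. This is classical: $\PP^2$, quadrics with a point, and the degree $5$ case of Enriques/Swinnerton-Dyer. For $d\le4$ we use the known list of possible Galois images in $W(E_{9-d})$ for minimal surfaces. Minimality forces $(\pic\overline Y)^G=\ZZ K_Y$, and we check, conjugacy class by conjugacy class of cyclic subgroups acting minimally, that some subgroup $H\subseteq G$ has $H^1(H,\pic\overline Y)\neq0$. For a cyclic group $H=\langle h\rangle$ this reduces to the computation
\[ H^1(H,M)=\ker(N_h)/(1-h)M, \]
a finite lattice calculation for each class.

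\textbf{Step 3 (conic bundle case).} Let $Y\to C$ be relatively minimal with $C$ a conic. Since $Y(k)\neq\emptyset$, we have $C\cong\PP^1$. Each degenerate fiber whose components are defined over its field of definition could be contracted, so relative minimality forces every degenerate-fiber orbit to be a negative cycle in the signed-permutation language of $W(D_n)$. We then compute $H^1(\langle g^i\rangle,\ZZ^{n+2})$ explicitly. When $n\ge4$, or $n=3$ in the appropriate configurations, some power $g^i$ has nontrivial $H^1$. The nontrivial classes come from pairs of negative cycles, which give $\ZZ/2$ contributions. This contradicts the hypothesis, so $n\le3$. The cases $n\le 3$ (in fact $n\le 3$ with a rational point) are rational: $n=0,1,2$ by elementary transformations, and $n=3$ by contracting to a del Pezzo surface of degree $\ge5$.

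\textbf{Main obstacle.} The main difficulty is the case analysis for del Pezzo surfaces of degree $\le4$ and conic bundles with $n\ge4$. There we must exhibit, for every minimal cyclic action, a subgroup with nonzero $H^1$. I would first try to use the norm/kernel formula systematically on the $D_n$ signed-cycle decomposition, reducing each case to a pair of negative cycles.
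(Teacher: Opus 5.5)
The paper gives no proof of this statement. It is quoted from Colliot-Thélène and used as a black box, so there is nothing internal to compare against. Your outline has the natural architecture: Iskovskikh reduction, classical rationality of minimal surfaces with a point and $K^2\ge5$, and a cohomological obstruction when $K^2\le4$. Step~1 is fine, but there are two genuine holes.

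\textbf{The stable rationality conclusion.} The theorem has two conclusions and you prove only one. Showing that vanishing of all $H^1(k',\pic\Xbar)$ forces $k$-rationality says nothing about whether a non-rational $X$ could be stably $k$-rational; your ``a fortiori'' runs in the trivial direction. You need the separate fact that $H^1(k',\pic\Xbar)$ is a stable birational invariant of smooth projective varieties and vanishes for $\PP^N$. This holds because the Galois lattice $\pic\Xbar$ is a birational invariant up to permutation summands, and $\pic(\overline{X\times\PP^m})\cong\pic\Xbar\oplus\ZZ$. Only then does the nonvanishing class you produce obstruct stable rationality.

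\textbf{The case analysis.} Steps~2 and~3 assert, rather than prove, the one substantive claim: every minimal cyclic action with $K^2\le4$ has some $\langle g^i\rangle$ with $H^1\neq0$. This is not a formality, because $H^1(\langle g\rangle,\pic\Xbar)$ itself frequently vanishes. Take the Coxeter element $w\in W(E_8)$ acting on $\pic\Xbar=\ZZ K\oplus E_8$ (orthogonal sum) of a degree~$1$ surface. Then $w$ has no fixed vectors and $\det(1-w)=\Phi_{30}(1)=1$, so $H^1(\langle w\rangle,\pic\Xbar)=E_8/(1-w)E_8=0$. One must pass to a proper power such as $w^{15}=-1$, where $H^1=E_8/2E_8\neq0$.

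Your conic bundle heuristic is also off by one pair. Suppose $h$ has only negative cycles, $r$ of them. The sequence $0\to R_n\to\pic\Xbar\to\ZZ^2\to0$, via $x\mapsto(x\cdot f,x\cdot K)$, together with $\pic\Xbar^{\langle h\rangle}=\langle f,K\rangle$ (whose image has index $4$) and $|\det(1-h\mid R_n)|=2^r$, gives $|H^1(\langle h\rangle,\pic\Xbar)|=2^{r-2}$. So a negative $3$-cycle times a negative fixed point has $H^1=0$, and you must pass to its cube, which has four negative fixed points.

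Step~3 closes with this formula, noting that positive cycles of $g^i$ can be contracted over the fixed field of $g^i$ without changing $H^1$. Alternatively, quote Proposition~\ref{prop:yang}. Either way, relative minimality makes all cycles negative, and for $n\ge4$ some power of $g$ has at least four negative cycles. For $n=3$ the only relatively minimal class in $W(D_3)$ is Yang's Type~2, which satisfies $(*)$. So your remark about $n=3$ is vacuous, though harmless, since you treat $n\le3$ directly.

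For del Pezzo surfaces of degree $\le4$ you still have to run through the minimal classes in $W(D_5)$, $W(E_6)$, $W(E_7)$, $W(E_8)$. Alternatively, cite existing tables: Kunyavskii--Skorobogatov--Tsfasman in degree~$4$, Swinnerton-Dyer and Manin in degree~$3$. As it stands, this case is unproved, and it is the heart of the theorem. Finally, your list of classical cases in Step~2 omits degree~$6$, which needs Manin's theorem.
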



For conic bundles over finite fields, rationality and stable rationality are equivalent and the above cohomological vanishing condition is both necessary and sufficient to check rationality. We will say that $X$ satisfies condition $(*)$ if it satisfies the hypotheses of the theorem above. 


Since the absolute Galois group of a finite field is
topologically generated by the Frobenius automorphism,
the Galois action on $\pic \Xbar$
is determined by a single element $g \in W(D_n)$, well-defined up to conjugacy.
Thus the rationality problem reduces to a condition
on the cyclic subgroup generated by $g$.

Using the identification $
\pic\Xbar \cong \mathbb{Z}^{n+2}$, 
we are led to study the vanishing of
\[
H^1(\langle g^i \rangle, \mathbb{Z}^{n+2})
\quad \text{for all } 1 \le i \le \text{ord}(g), \quad i \vert \text{ord}(g).
\]

In other words, rationality of a conic bundle over $\mathbb{F}_q$
is equivalent to a purely group-theoretic condition on the action of a cyclic subgroup of $W(D_n)$ on $\mathbb{Z}^{n+2}$.  In particular, if we restrict to $k$ a finite field, the cohomological vanishing condition imposes constraints on the signed cycle structure of $g$ for $\langle g \rangle \subseteq W(D_n)$. 

The following result from Yang gives all possible signed cycle structures for generators of cyclic subgroups of $W(D_n)$ satisfying the $(*)$ condition.

\begin{prop}{\cite[Corollary 9]{yang2024}}\label{prop:yang}
    
If a cyclic group $G = \langle g \rangle$ satisfies the (*) condition, that is
\[\text{H}^1(\langle g^i \rangle , \pic\Xbar) = 0 \; \text{for all} \; i, \] and $\beta$ is a signed permutation cycle in $g$ with $\sigma(\beta) = -1$, then the length of the cycle $pr(\beta) \leq 2$ and there is at most one such signed permutation cycle. \\
Up to conjugation, there are three types of generators $g$ satisfying the assumption: \begin{enumerate}
    \item $g = c_1 c_2 \beta_3 \cdots \beta_R$,
    \item $g = c_1 c_2(2 3) \beta_3 \cdots \beta_R$,
    \item $g = \beta_1 \cdots \beta_R$,
\end{enumerate}
where $\sigma(\beta_i) = 1$ for all $i$.
\end{prop}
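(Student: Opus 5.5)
The plan is to compute $H^1(\langle h\rangle,\pic\Xbar)$ for an arbitrary $h\in W(D_n)$ directly from its signed cycle type, and then impose vanishing for every power $h=g^i$. I would filter $\pic\Xbar$ by the vertical sublattice. Let $M\subset\pic\Xbar$ be spanned by $f$ and the $E'_j,E''_j$. Then $\pic\Xbar/M\cong\ZZ$ via $D\mapsto D\cdot f$, and this quotient is a trivial module because $f$ is Galois-invariant. The vertical lattice is a quotient of a permutation module:
\[
0\to I\to \textstyle\bigoplus_{\beta} P_\beta \to M\to 0,
\]
where $\beta$ runs over the signed cycles of $h$. Here $P_\beta$ is the permutation module on the components $E'_j,E''_j$ with $j$ in the support of $\beta$. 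It is $\ZZ[\langle h\rangle/\langle h^{\ell}\rangle]^{\oplus 2}$ if $\beta$ is positive of length $\ell$, and $\ZZ[\langle h\rangle/\langle h^{2\ell}\rangle]$ if $\beta$ is negative. The kernel $I$ records the relations $E'_j+E''_j=E'_{j'}+E''_{j'}$.

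Using $H^1$ of permutation modules $=0$, Shapiro's lemma, and periodicity of cyclic cohomology, I would run the long exact sequences for these two extensions. The hypothesis $X(k)\neq\emptyset$ gives a rational point, and via a section-type argument this should make $\pic\Xbar^{G}\to\ZZ$ surjective onto the relevant index. The expected outcome is the standard conic-bundle formula
\[
H^1(\langle h\rangle,\pic\Xbar)\cong(\ZZ/2)^{\max(m(h)-2,\,0)},
\]
where $m(h)$ is the number of negative cycles of $h$, i.e. the number of closed points of $\PP^1$ whose fiber components are swapped. This matches the known description of $\mathrm{Br}X/\mathrm{Br}k$ for conic bundles by $\ZZ/2$ residues at nonsplit fibers, modulo the reciprocity relation.

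Given this formula, condition $(*)$ becomes: $m(g^i)\le 2$ for every $i$. I would then analyze how a negative cycle behaves under powers. A negative cycle $\beta$ of length $\ell$ satisfies $\beta^{\ell}=c_{t_1}\cdots c_{t_\ell}$, which is $\ell$ negative $1$-cycles. More generally, $g^{i}$ restricted to the support of $\beta$ splits into $\gcd(i,\ell)$ cycles, and these are negative exactly when $i/\gcd(i,\ell)$ is odd. Taking $i=\ell$, or $i=\ell\cdot(\text{odd part of }\operatorname{ord}g)$ to kill the other cycles' signs, forces $\ell\le 2$.

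Next, let $\beta$ be a negative $2$-cycle. Then $g^2$ turns it into two negative $1$-cycles and turns every other negative cycle of length $\le 2$ positive or into pairs. Combined with the parity constraint that the number of negative cycles is even, this leaves exactly three possibilities. These are no negative cycles (type (3)), two negative fixed points (type (1)), or one negative fixed point together with one negative transposition (type (2), written up to conjugacy as $c_1c_2(23)$). In particular, two negative $2$-cycles fail because $m(g^2)=4$, and so does any configuration with more than two negative cycles, since $m(g)\ge 4$ when $m(g)$ is even and exceeds $2$. Conversely, each of the three types has $m(g^i)\le 2$ for all $i$, and positive cycles contribute only permutation summands that do not affect $H^1$, as noted above for blow-ups on smooth fibers.

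The main obstacle is the cohomology computation, specifically getting the exact group $(\ZZ/2)^{m-2}$ rather than just an upper or lower bound. This requires tracking the connecting map $H^1(\ZZ)\to H^2(I)$ and the image of $\pic\Xbar^G\to\ZZ$, where the rational point hypothesis enters. I would first handle this by computing $\hat H^{-1}$ and $\hat H^0$ of $I$ explicitly for a cyclic group, since $I$ is close to an augmentation-type lattice. After that, the combinatorial elimination is routine.
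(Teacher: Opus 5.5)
The paper does not prove this statement; it quotes it from Yang's Corollary 9, so your proposal serves as a self-contained replacement. Its overall plan is sound. Your formula $H^1(\langle h\rangle,\pic\Xbar)\cong(\ZZ/2)^{\max(m(h)-2,0)}$ is correct. The combinatorial step is complete: $m(g^\ell)\ge\ell$ for a negative $\ell$-cycle, $m(g^2)=2\cdot\#\{\text{negative $2$-cycles}\}$, $m$ is even, and you check the converse. It proves the statement in its correct reading: at most two negative cycles, at most one of them of length $2$. Read literally, ``at most one such cycle'' is contradicted by types (1) and (2).

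There is a gap, in the step you yourself flag as the crux, and the mechanism you propose for it is wrong. The exact group depends on the image of $(\pic\Xbar)^{G}\to\ZZ$, $D\mapsto D\cdot f$. Equivalently, it depends on $\delta(1)$ for the connecting map $\delta\colon H^0(G,\ZZ)=\ZZ\to H^1(G,M)\subset H^2(G,I)$; no map out of $H^1(G,\ZZ)=0$ is involved. A rational point does not govern this image. In the statement $g$ is an abstract element of $W(D_n)$, and over a finite field every conic bundle has a $k$-point anyway, since each fiber over an $\mathbb{F}_q$-point of $\PP^1$ is a conic over $\mathbb{F}_q$. If a rational point forced surjectivity, you would get $H^1\cong H^1(G,M)\cong(\ZZ/2)^{m-1}$, which would wrongly exclude types (1) and (2).

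The correct input is purely lattice-theoretic. If $j$ lies in a negative $\ell$-cycle, then $h^{\ell}$ swaps $E'_j$ and $E''_j$. So any invariant $D$ satisfies $D\cdot f=D\cdot E'_j+D\cdot E''_j=2\,D\cdot E'_j$, which is even. On the other hand, $-K_X$ is invariant with $-K_X\cdot f=2$. Hence for $m\ge1$ the image is exactly $2\ZZ$, and $\delta(1)$ is a nonzero element of order $2$.

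For $H^1(G,M)$ itself, note that $I$ is the image of the augmentation kernel of $\ZZ[\{1,\dots,n\}]$ under $e_j\mapsto e'_j+e''_j$. Let $N=\operatorname{ord}(h)$, and let $O$ run over the cycles of $pr(h)$, of lengths $\ell_O$. Then $H^1(G,M)=\ker\bigl(\hat H^0(G,I)\to\hat H^0(G,P)\bigr)$. Using $\hat H^{-1}(G,\ZZ)=0$, we get $\hat H^0(G,I)=\ker\bigl(\bigoplus_O\ZZ/(N/\ell_O)\to\ZZ/N\bigr)$, where the map is $(a_O)\mapsto\sum_O\ell_Oa_O$.
\begin{itemize}
\item On a positive cycle, the map to $\hat H^0(G,P_\beta)$ is the diagonal into $(\ZZ/(N/\ell))^2$, hence injective.
\item On a negative cycle, it is the reduction $\ZZ/(N/\ell)\to\ZZ/(N/2\ell)$, whose kernel is generated by $N/2\ell$.
\end{itemize}
This gives $H^1(G,M)\cong\{\epsilon\in(\ZZ/2)^m:\sum_O\epsilon_O=0\}$, and dividing by $\delta(1)$ yields $(\ZZ/2)^{m-2}$.

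With this repair your argument is a complete proof. It is the lattice-level version of the residue description of $\mathrm{Br}(X)\cong H^1(k,\pic\Xbar)$ over a finite field. The paper, by contrast, simply imports Yang's classification.
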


Consequently, rationality of a conic bundle over a finite field is determined entirely by whether the Frobenius element in $W(D_n)$ lies in one of these three families of conjugacy classes. Notably, large negative cycles, where the two irreducible components of some degenerate fiber are exchanged within a cycle of large length, obstruct a conic bundle from being rational.

This reduction transforms a geometric rationality problem
into a purely combinatorial  problem in the Weyl group $W(D_n)$. In the next section we compute the proportion
of elements of each type of possible rational conjugacy class.  

\begin{remark}
As of yet, this is merely computing the proportion of such conjugacy classes appearing in $W(D_n)$ as some conjugacy class representative may not, in fact, arise from a conic bundle in general. However, we will later see that equidistribution results show that this proportion represents the true proportion of rational conic bundles that we are interested in.
\end{remark}

\subsection{Moduli Space of Good Conic Bundles}\label{section:2.4}
The moduli-theoretic framework and monodromy results used here are developed in our forthcoming work \cite{Hassett-Hernandez}. We briefly recall the parts of this construction that are needed for the counting argument.

We introduce this moduli space to make precise what it means to choose a ``random" good balanced conic bundle with $n$ degenerate fibers. Rather than counting equations directly, we work with a parameter space whose points correspond to good balanced conic bundles defined over a finite field together with enough auxiliary data to keep track of the components of the degenerate fibers. The balance condition is useful here because it restricts attention to a finite type family, so that point counting and Chebotarev density arguments can be applied.

The monodromy comes from moving around in this parameter space and following the induced action on the Picard group of the fiber. Since the components of the degenerate fibers may be permuted, and the two components of a degenerate fiber may be exchanged, this monodromy naturally acts through the signed permutation group $W(D_n)$. 

\begin{prop}
 Good balanced conic bundles with $n$
degenerate fibers may be parametrized by a smooth irreducible scheme
of finite type. The generic such $X$ is isomorphic to a blowup of the Hirzebruch surface $\bold{F}_0$.
\end{prop}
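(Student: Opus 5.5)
We build the parameter space explicitly from the structure described just before the statement: every good balanced conic bundle is realized as a conic bundle embedded in a projective bundle over $\PP^1$. By definition of balance, $\phi_*\omega_\phi^{-1}$ has splitting type $(a,a,a)$, $(a,a,a+1)$ or $(a,a+1,a+1)$. Since $\deg$ is determined by $n$, the splitting type is determined up to twist by $n$ modulo $3$. The anticanonical relative embedding realizes $X$ as a divisor in $P=\PP(\mathcal{O}(a_0)\oplus\mathcal{O}(a_1)\oplus\mathcal{O}(a_2))$ cut out by a symmetric $3\times 3$ matrix of forms $(q_{ij})$ with $q_{ij}\in H^0(\PP^1,\mathcal{O}(a_i+a_j+c))$ for a fixed $c$. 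Its discriminant $\det(q_{ij})$ is a binary form of degree $n$ whose zeros are the degenerate fibers.

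\textbf{Step 1: the affine space of matrices.} Let $V$ be the affine space of such symmetric matrices. Define $U\subset V$ by two requirements: the discriminant has $n$ distinct roots, and at each root the conic has rank exactly $2$. Both are open conditions, so $U$ is smooth and irreducible as an open subset of affine space. We then check that points of $U$ give exactly the good conic bundles of the given splitting type. Smoothness of the total space at the nodes of degenerate fibers follows from the discriminant vanishing simply, via the local form $xy=t$. Conversely, given a good balanced $X$, the splitting type makes $\phi_*\omega_\phi^{-1}$ isomorphic to the fixed bundle, and choosing such an isomorphism produces a matrix in $U$.

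\textbf{Step 2: the moduli space.} The moduli space is the quotient of $U$ by $\aut(\mathcal{E})\times\aut(\PP^1)$, or we simply regard $U$ itself as the finite-type parametrizing scheme. Finite type is automatic, since $V$ is a finite-dimensional affine space. To keep track of the components of the degenerate fibers, as the surrounding discussion demands, we pass to the finite étale cover $\tilde U\to U$ that labels the roots of the discriminant and the two components over each root. We must show $\tilde U$ is irreducible, i.e.\ that monodromy is all of $W(D_n)$. This monodromy statement is what the later Chebotarev argument really uses. If only the stated proposition is needed, irreducibility of $U$ suffices and $\tilde U$ can be deferred.

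\textbf{Step 3: the generic member.} Take a generic element of $U$ over $\kbar$. Show that its degenerate fibers' components admit an even choice $\{E'_i\}$, in the sense of the definition of evenness above, whose contraction gives $\mathbf{F}_0$ rather than $\mathbf{F}_r$ with $r\ge 2$. Hirzebruch surfaces $\mathbf{F}_r$ with $r>0$ carry a curve of negative self-intersection, which is a closed condition. Concretely, the balanced splitting forces a section $\Sigma$ with $\Sigma^2$ bounded in terms of $n$. Toggling $E'_i\leftrightarrow E''_i$ changes $\Sigma^2$ after contraction by $\pm1$ per fiber, so we can adjust the parity and size to reach $0$, possibly after an elementary transformation. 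Alternatively, we exhibit one explicit blowup of $\PP^1\times\PP^1$ at $n$ points in distinct fibers, in general position, that is balanced. Since being a blowup of $\mathbf{F}_0$ is open in the irreducible family, this one example gives the generic statement.

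\textbf{Main obstacle.} I expect Step 3 to be the hardest, specifically matching the balanced splitting type with the blowup of $\mathbf{F}_0$ and verifying the parity issue in the evenness definition. I would first compute $\phi_*\omega^{-1}$ for a blowup of $\mathbf{F}_0$ at $n$ general points and check that it is balanced.
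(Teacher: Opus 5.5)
The paper does not actually prove this proposition. It is quoted from the forthcoming \cite{Hassett-Hernandez}, and the surrounding text only signals the intended route: balance is an open condition, which is what makes the parameter space finite type, and representatives are produced by blowing up $\PP^1\times\PP^1$ at points in distinct fibers.

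Your route is genuinely different and sound. You realize every good balanced $X$ as a relative conic in the $\PP^2$-bundle of the unique balanced splitting type. That type is determined by $n$ outright, with $a_0=\lfloor n/3\rfloor$ and $a_2=\lceil n/3\rceil$. This model buys several things:
\begin{enumerate}
\item Smoothness, irreducibility and finite type of $U$ are immediate.
\item Your local computation correctly matches simple roots of $\det(q_{ij})$ with smoothness at the nodes. Simple roots already force rank exactly $2$, so your second open condition is redundant.
\item Grothendieck splitting holds over any field, so every good balanced conic bundle over $k$, Galois-twisted ones included, is a $k$-point of $U$. This is what point counting needs.
\end{enumerate}
The blowup model instead gives you the $\mathbf{F}_0$ statement and an even labeling for free (the exceptional curves, which contract to $r=0$). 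That labeling is what $\mathcal{C}_n^b$ and the monodromy computation use. On the other hand, in that model one must separately show balance is open and nonempty, and over $k$ the blown-up points need not be rational.

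In Step 3 you can dispense with openness. Your first sketch, made precise, shows that every geometric point of $U$ is a blowup of $\mathbf{F}_0$, not just the generic one. The argument runs as follows.
\begin{enumerate}
\item Contracting the component of a degenerate fiber that meets a section $\Sigma$ raises the square of its image by $1$. Contracting the other component leaves it unchanged. So the change is $+1$ or $0$, not $\pm1$.
\item Every good conic bundle has a section with $\Sigma^2\le 0$: take the strict transform of a minimal section of any $\mathbf{F}_r$ it contracts to.
\item Balance forces $\Sigma^2\ge -a_2=-\lceil n/3\rceil$ for every section. Indeed, restriction gives a map $\phi_*\omega_\phi^{-1}\cong\bigoplus\mathcal{O}(-a_i)\to\omega_\phi^{-1}|_\Sigma\cong\mathcal{O}_{\PP^1}(\Sigma^2)$. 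It is surjective because fiberwise it is evaluation of a globally generated bundle.
\item Contract the $\Sigma$-component in exactly $-\Sigma^2\le n$ fibers and the other component elsewhere. The result is a Hirzebruch surface with a section of square $0$.
\item That surface must be $\mathbf{F}_0$, since on $\mathbf{F}_r$ with $r>0$ every section has square $-r$ or at least $r$.
\end{enumerate}

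If you keep your alternative route instead, here is the balance check you flag. $H^0(\phi_*\omega_\phi^{-1}(m))$ is the space of bidegree $(2,m)$ forms through the $n$ points. For general points it has dimension $\max(0,3(m+1)-n)$, and this maximal-rank property characterizes the balanced type. Either way, such an example is also what shows $U\neq\emptyset$, which your irreducibility claim needs.

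One correction to the side remark in Step 2. The cover labeling the roots and both components over each root has degree $2^n n!=|W(B_n)|$. For $n\ge1$ it is not irreducible, because the parity of $r$ after contraction is locally constant in families. It splits into even and odd labelings. The $W(D_n)$-monodromy statement is irreducibility of the even-labeling cover, of degree $2^{n-1}n!$, which is exactly why $\mathcal{C}_n^b$ records even tuples. This does not affect the proposition itself.
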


\begin{definition}
    Let $\mathcal{C}_n^b$ be the moduli stack parametrizing
$$(\phi:X \longrightarrow \PP^1; E'_1,\ldots,E'_n)$$
where $\phi$ is a good balanced conic bundle
with $n$ degenerate fibers and
$E'_1,\ldots,E'_n$ is an even tuple of 
components of degenerate fibers. 
Isomorphisms are commutative diagrams
$$
\xymatrix{ X \ar^{\sim}[r] \ar_{\phi}[d] & Y \ar^{\varphi}[d] \\
\PP^1  \ar^{\sim}[r] & \PP^1
}
$$
where the horizontal arrows are isomorphisms
respecting the fibral curves and their order.
\end{definition}

\begin{prop}{\cite[Corollary 29]{Hassett-Hernandez}}
    The monodromy representation on Picard
groups of good conic bundles
with $n \ge 3$ degenerate fibers
is the Weyl group $W(D_n)$.  
\end{prop}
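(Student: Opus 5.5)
The statement is that the monodromy of the universal family of good conic bundles with $n \ge 3$ degenerate fibers on $\pic$ is the full group $W(D_n)$. The plan is to prove the two inclusions separately. The containment in $W(D_n)$ is formal: monodromy preserves the intersection form, the fiber class $f$ and the canonical class $K_X$. By Proposition 15 of \cite{Hassett-Hernandez}, the stabilizer of $f$ and $K_X$ in $\aut(\pic X)$ is exactly $W(D_n)$. For the reverse inclusion, I will work on the irreducible parameter space of good balanced conic bundles, and use that its generic member is a blowup of $\bold{F}_0$ in $n$ points in distinct fibers. This gives an explicit open family in which the monodromy can be computed.

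First I realize the permutation part. Over the open set where $X = \mathrm{Bl}_{p_1,\dots,p_n}(\PP^1\times\PP^1)$ with the $p_i$ in distinct fibers of the first projection, the $n$ degenerate fibers lie over the points $x_i = pr_1(p_i)$. Moving the configuration of the $x_i$ around loops in the configuration space of $n$ distinct points of $\PP^1$ braids them. Since $\pi_1$ of that configuration space surjects onto $S_n$, and the labeled components $E''_i$ (the exceptional curves) are carried along, the monodromy surjects onto $S_n$ via $pr$.

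Next I produce sign changes. I will show that some loop in the parameter space exchanges $E'_i$ and $E''_i$ for two indices at once. The natural construction is a degeneration in which two points $p_i, p_j$ swap their roles relative to the two rulings, for example by elementary transformations $\bold{F}_0 \dashrightarrow \bold{F}_0$ centered at $p_i,p_j$, which exchange $E'$ and $E''$ in those two fibers and preserve the evenness of the tuple. Realizing this as an actual loop means finding two points of the parameter space representing the same conic bundle with these two different markings, joined by a path. Irreducibility (smooth, irreducible, finite type) supplies the path. Concretely, the subgroup generated contains $c_ic_j(ij)$ or $c_ic_j$. Together with $S_n$, this generates the even sign changes $(\ZZ/2)^{n-1}$, and hence all of $W(D_n) = S_n \ltimes (\ZZ/2)^{n-1}$. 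Alternatively, I could argue from the other direction: the monodromy contains the full $S_n$ and acts irreducibly on $R_n\otimes\mathbb{Q}$. For $n\ge 3$, the subgroups of $W(D_n)$ surjecting onto $S_n$ are $S_n$ itself (up to conjugacy by $W(B_n)$) or all of $W(D_n)$. It would then suffice to exhibit a single monodromy element that is a nontrivial sign change, or to check that the locus of bundles with a chosen labeling of components is irreducible, i.e.\ that the marked moduli space $\mathcal{C}_n^b$ is a connected cover of degree $|W(D_n)|/n!$ over the space with only labeled fibers.

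The main obstacle is this sign-change step. Permutations come cheaply from braiding the base points, but showing that the components within a fiber are genuinely swapped requires a global argument. I would try first to prove that the cover $\mathcal{C}_n^b \to \{\text{bundles with ordered degenerate fibers}\}$ is connected. This can be done by exhibiting the marked space as parametrized by a smooth irreducible variety, namely blowups of $\bold{F}_0$ in ordered points together with the choices of even markings connected by elementary transformations. The hypothesis $n\ge3$ enters both to exclude the small exceptional cases (where $D_2$ is reducible and $D_3=A_3$ has extra coincidences) and to make the subgroup classification work.
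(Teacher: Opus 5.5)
The paper gives no proof of this statement; it is quoted from \cite{Hassett-Hernandez}. So there is nothing in the paper to compare against, and I assess your argument on its own.

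The inclusion in $W(D_n)$ and the permutation part are fine. Your sign-change mechanism (one surface with two $\mathbf{F}_0$-presentations related by a double elementary transformation) is the right one and does complete the proof. However, the sentence ``Irreducibility (smooth, irreducible, finite type) supplies the path'' invokes the wrong space. A point of the unmarked parameter scheme carries no marking, so ``two points representing the same conic bundle with different markings'' has no meaning there. The marking at the end of a path is defined only by parallel transport, which is exactly what you are computing. (No degeneration is involved either.)

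The path must live in the configuration space $U\subset(\PP^1\times\PP^1)^n$ of ordered points with distinct first coordinates. There $\pic$ is globally trivialized by $f,h,E_1,\dots,E_n$ (with $E_k$ the exceptional curves), and $U$ is obviously irreducible. Concretely:
\begin{enumerate}
\item Take $u=(p_1,\dots,p_n)\in U$ with $p_i,p_j$ not on a common horizontal line. If they do share one, contracting below lands in $\mathbf{F}_2$, so you must exclude that case.
\item On $X_u$, contract $f-E_i$, $f-E_j$ and $E_k$ for $k\ne i,j$. The result is again $\mathbf{F}_0$, so you get $u'\in U$ with $X_{u'}\cong X_u$ over $\PP^1$, respecting the labels of the degenerate fibers.
\item A path from $u$ to $u'$ in $U$ maps to a loop in moduli. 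Its monodromy fixes $f$ and $E_k$ for $k\ne i,j$, and sends $E_i\mapsto f-E_i$, $E_j\mapsto f-E_j$, $h\mapsto h+f-E_i-E_j$.
\end{enumerate}
This element is exactly $c_ic_j$, so the hedge ``$c_ic_j(ij)$ or $c_ic_j$'' is unnecessary. The $S_n$-conjugates of $c_ic_j$ generate all even sign changes, which finishes the argument.

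Your fallback route rests on a false group-theoretic claim. For even $n\ge 4$, the element $c_1\cdots c_n=-1$ is central and lies in $W(D_n)$. Both $\{1,c_1\cdots c_n\}\times S_n$ and $\{\sigma:\sigma\ \text{even}\}\cup\{c_1\cdots c_n\,\sigma:\sigma\ \text{odd}\}$ surject onto $S_n$. The second has lifts of transpositions of signed type $[2^+,1^-,\dots,1^-]$, so it is not conjugate to $S_n$, and neither group equals $W(D_n)$. Hence ``a single nontrivial sign change'' suffices only if it is not $c_1\cdots c_n$; for $n\ge3$, $c_ic_j$ qualifies. You also give no independent source for irreducibility on $R_n\otimes\mathbb{Q}$.

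Two smaller points:
\begin{itemize}
\item $n=3$ is part of the statement, so the hypothesis $n\ge 3$ is not there to exclude $D_3$.
\item In your last paragraph, no separate ``choices of even markings'' need to be adjoined. An ordered even tuple of contracted curves already determines $h=\tfrac12(-K_X-2f+\sum_k E_k)$, i.e.\ a full marking. So $U$ maps onto the $\mathbf{F}_0$-locus of $\mathcal{C}_n^b$. Once you check this locus is dense (for instance, $\mathbf{F}_{2m}$ deforms to $\mathbf{F}_0$ carrying the blown-up points along), this gives connectedness of $\mathcal{C}_n^b$, which is an equivalent form of the statement.
\end{itemize}
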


\begin{prop}{\cite[Corollary 40]{Hassett-Hernandez}}
    For $n \geq 4$, the moduli stack of balanced conic bundles $ \left[ W(D_n) \backslash \mathcal{C}_n^b \right]$ is separated with quasi-projective coarse moduli space.
\end{prop}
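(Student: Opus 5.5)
The plan is to present the stack $\left[ W(D_n) \backslash \mathcal{C}_n^b \right]$, which only remembers the good balanced conic bundle $\phi:X\to\PP^1$ and forgets the ordered even tuple, as a global quotient $[U/G]$. Here $U$ is a smooth quasi-projective parameter space of equations and $G$ is a reductive group. Separatedness and quasi-projectivity of the coarse space would then follow from geometric invariant theory.

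First, fix the splitting type. Since the bundle is balanced and $\deg$ of $(\phi_*\omega_\phi^{-1})^\vee$ is determined by $n$, the triple $(a_0,a_1,a_2)$ is determined by $n$. Set $\mathcal{E}=\mathcal{O}(a_0)\oplus\mathcal{O}(a_1)\oplus\mathcal{O}(a_2)$. The anticanonical relative embedding realizes $X$ as a divisor in $\PP(\mathcal{E})$ cut out by a symmetric $3\times 3$ matrix $Q$ of binary forms of prescribed degrees. Its discriminant $\det Q$ is a binary form of degree $n$, and goodness is equivalent to $\det Q$ being squarefree. Let $U$ be the open subset of the affine space of such $Q$ with squarefree discriminant; it is smooth and irreducible, matching the smooth irreducible parameter scheme of the preceding proposition.

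The group $G=\aut(\PP(\mathcal{E}))\times\mathbb{G}_m$, mapping onto $PGL_2$ acting on the base, acts on $U$. Isomorphisms of conic bundles are exactly $G$-orbits, because any isomorphism of good balanced conic bundles is induced by an isomorphism of the pushed-forward bundles. Since $\mathcal{E}$ is balanced, $\aut(\mathcal{E})$ is reductive up to a unipotent radical of the form $\mathrm{Hom}(\mathcal{O}(a_0),\mathcal{O}(a_0+1))$, which we will handle by a Levi reduction. This gives $\left[ W(D_n) \backslash \mathcal{C}_n^b \right]\simeq[U/G]$, with the $W(D_n)$-torsor $\mathcal{C}_n^b\to[U/G]$ given by choosing an even tuple $E'_1,\dots,E'_n$.

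Next, show that the stabilizers are finite and the action is proper. Consider the map $U\to\{\text{squarefree binary forms of degree } n\}$ sending $Q\mapsto\det Q$. A squarefree binary form of degree $n\ge 3$ is GIT-stable for $SL_2$, and for $n\ge4$ the $PGL_2$-stabilizer of its roots is finite. So it remains to control the fiberwise automorphisms, namely the elements of $G$ acting trivially on $\PP^1$. Such an automorphism is an element of the orthogonal group of $Q$ over $k[s,t]$, and it must fix every node of the $n$ degenerate fibers.

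The key claim is that for $n\ge4$ this group is finite: the only possibilities are $\pm 1$ together with the fiberwise involutions swapping components. The reason is that an element of $SO(Q)$ over $\PP^1$ with positive-dimensional orbit would give a nontrivial vector field on $X$ tangent to fibers. That vector field would vanish at the $n$ nodes and twist $\omega_\phi$, and a degree count against $\mathcal{E}$ balanced should force it to be zero once $n\ge4$.

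Given finite stabilizers, properness of $G\times U\to U\times U$ would be checked with the valuative criterion. Take a DVR $R$ and two families over $R$ that are isomorphic over the fraction field. The discriminant points give a stable configuration of $n\ge 4$ points on $\PP^1$, so the base isomorphism extends because $[\text{stable binary forms}/PGL_2]$ is separated. The fiber isomorphism then extends because $X_R$ is the relative minimal model over $\PP^1_R$ of its generic fiber given the fibration with good reduction. Here we use that both are good conic bundles, so the isomorphism extends away from codimension two and hence everywhere by normality.

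Finally, Keel–Mori gives a separated coarse space. Alternatively, choose a $G$-linearization pulled back from the stable discriminant locus: all points of $U$ are then properly stable, so $U/G$ is a geometric quotient, quasi-projective over the quasi-projective space of stable degree-$n$ binary forms modulo $PGL_2$. I expect the main obstacle to be the finiteness of fiberwise automorphisms and the extension of the fiberwise isomorphism in the valuative criterion. This is exactly where $n\ge4$ and the balanced hypothesis enter, and where I would first attempt the explicit vector-field and degree argument on $\PP(\mathcal{E})$.
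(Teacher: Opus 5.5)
The paper does not prove this statement; it is quoted from \cite{Hassett-Hernandez}, so I am judging your outline on its own terms.

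\textbf{Finite stabilizers.} This step is sound and can be finished exactly as you suggest. One has $T_{X/\PP^1}\cong\omega_\phi^{-1}$, so $H^0(X,T_{X/\PP^1})=\bigoplus_i H^0(\PP^1,\mathcal{O}(-a_i))=0$, because balancedness and $a_0+a_1+a_2=n\ge 3$ force every $a_i\ge 1$.

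\textbf{The main gap: extending the fiberwise isomorphism.} This is the key step of the valuative criterion, and neither half of your justification holds.
\begin{enumerate}
\item Smooth conic-bundle models over $\PP^1_R$ with a given generic fiber are not unique. At the generic point of the special fiber, $X_1$ and $X_2$ are two smooth models of one conic over a DVR. The induced map is defined there, but it can collapse $X_{1,k}$ onto a curve of $X_{2,k}$: the map $(u,[x:y])\mapsto(u,[tx:y])$ on $\PP^1\times\PP^1_R$ sends the special fiber onto a section. So you do not even get an isomorphism in codimension one.
\item Normality extends functions, not maps to projective targets. An isomorphism in codimension one between smooth threefolds need not extend (Atiyah flop).
\end{enumerate}
Your argument never uses balancedness. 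It would therefore also show that the generic isomorphism between the constant family $\mathbf{F}_0\times\spec R$ and a family degenerating $\mathbf{F}_0$ to $\mathbf{F}_2$ extends, which is false. Ruling out such jumps is exactly what the balanced hypothesis is for.

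\textbf{What is needed instead.} Work with the canonical bundle $\mathcal{E}=\phi_*\omega_\phi^{-1}$.
\begin{enumerate}
\item After extending and removing the base automorphism as you do, the generic isomorphism induces $A\colon\mathcal{E}_{1,K}\xrightarrow{\sim}\mathcal{E}_{2,K}$. In suitable bases, $A^{T}Q_1A=cQ_2$ for some $c\in K^*$.
\item Balancedness of the closed fibers identifies both $\mathcal{E}_i$ with the same split bundle on $\PP^1_R$. For $3\mid n$ this makes $A$ a constant matrix in $GL_3(K)$.
\item After rescaling, Smith normal form gives $A=U\,\mathrm{diag}(1,t^{e_2},t^{e_3})\,V$ with $U,V\in GL_3(R)$ and $0\le e_2\le e_3$.
\item Taking determinants gives $3v(c)=2(e_2+e_3)$.
\item Integrality of $Q_2$ then forces the reduction of $U^{T}Q_1U$ to have discriminant $0$ if $e_2=0<e_3$, or of the form $-\ell^2q$ with $\deg\ell=n/3\ge1$ if $e_2>0$. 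Either way squarefreeness fails unless $e_3=0$.
\end{enumerate}
This is where goodness, balancedness and the degree bound really enter. For $3\nmid n$ the same must be done inside the non-reductive group $\aut(\mathcal{E})$.

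\textbf{A second gap: quasi-projectivity.} For $3\nmid n$ your $G$ has a nontrivial unipotent radical, so Mumford's GIT does not apply. Passing to a Levi subgroup is not legitimate either, since it changes the quotient stack. You need a slice for the unipotent radical (for instance via the canonical maximal destabilizing subbundle of $\mathcal{E}$) or another quasi-projectivity argument. Keel--Mori by itself only produces a separated algebraic space.
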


These moduli results provide the geometric input for the Chebotarev density argument we will present
below. After identifying the monodromy group with $W(D_n)$, the distribution
of Frobenius classes in this family can be compared with the uniform distribution on conjugacy classes of the group $W(D_n)$.

\subsection{Chebotarev Density Theorem}
In section 2.3, we reduced checking the rationality of a conic bundle over $\mathbb{F}_q$ to checking a condition on the conjugacy class of the Frobenius element in the Weyl group $W(D_n)$. We want to be able to deduce that counting the proportion of such conjugacy classes in $W(D_n)$ will yield a number asymptotic to the true proportion of rational conic bundles over $\mathbb{F}_q$ with $n$ degenerate fibers.

To achieve this we will make use of two formulations of Chebotarev density theorems. The intuitive idea of this theorem is that Frobenius elements become equidistributed among all the conjugacy classes of a finite Galois group as the size of the base field approaches infinity.

\begin{definition}[Big O Notation]
    Recall that we say a function $f(x) = O(g(x))$ if there exists a positive real number $M$ so that $\vert f(x) \vert \leq M g(x)$ for all $x$ in its domain.
\end{definition}

We first address the case of fixed characteristic.

\begin{theorem}{\cite{meagher}}\label{thm:meagher}
Let $G$ be a finite group and let $X/ k$ be a quasi-projective variety of
dimension $d$ that is geometrically irreducible, integral, and separated,
defined over a finite field $k$ of order $q = p^r$ for some prime $p$.
Assume that $G$ acts on $X$, and let $Y = X/G$ be the quotient and
$f : X \to Y$ the quotient morphism.
Assume further that $f$ is finite étale.

Let $C \subset G$ be a conjugacy class. Then
\[
\bigl|\{\, y \in Y_{\mathbb{F}_q}(\mathbb{F}_q) :
        \Frob_{q} \in C \,\}\bigr|
    \;=\;
    \frac{|C|}{|G|}\, q^{d}
    \;+\;
    O\!\left(q^{d - 1/2}\right),
\]
where $\mathbb{F}_q$ is an extension of $k$. 
\end{theorem}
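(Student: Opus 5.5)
The plan is to reduce the count to point counts on twists of $X$ and then apply a Lang--Weil type estimate, made uniform via the Grothendieck--Lefschetz trace formula. Since $f:X\to Y$ is finite étale with group $G$ acting simply transitively on geometric fibers, every $y\in Y(\mathbb{F}_q)$ has geometric fiber $f^{-1}(y)(\kbar)$ a $G$-torsor. On this torsor the geometric Frobenius $F$ acts and commutes with $G$. Choosing $x$ in the fiber, there is a unique $h\in G$ with $F(x)=h\,x$. Changing $x$ to $a x$ replaces $h$ by $a h a^{-1}$, so the class $\Frob_q(y)$ is well defined. Here $\mathbb{F}_q$ is any finite extension of $k$, and all of the following is done after base change to $\mathbb{F}_q$.

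\textbf{Step 1 (fiberwise count).} Fix $g\in C$. For a single $y$ I count the points $x$ in its fiber with $F(x)=g\,x$. If $\Frob_q(y)\notin C$ there are none. If $\Frob_q(y)\in C$, the solutions form a coset of the centralizer $Z_G(g)$, so there are exactly $|Z_G(g)|=|G|/|C|$ of them. Summing over $y\in Y(\mathbb{F}_q)$ gives
\[
\bigl|\{ y\in Y(\mathbb{F}_q) : \Frob_q(y)\in C\}\bigr| \;=\; \frac{|C|}{|G|}\, N_g(q), \qquad N_g(q):=\#\{x\in X(\kbar): F(x)=g\,x\}.
\]
Every $x$ with $F(x)=g\,x$ satisfies $F(f(x))=f(x)$, so it lies over an $\mathbb{F}_q$-point of $Y$. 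Hence $N_g(q)$ is the number of $\mathbb{F}_q$-points of the twist of $X$ by $g$, equivalently the number of fixed points of $g^{-1}F$ on $X(\kbar)$.

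\textbf{Step 2 (trace formula).} The endomorphism $g^{-1}F$ is the Frobenius of a twisted form of $X$, so the Grothendieck--Lefschetz trace formula applies:
\[
N_g(q)=\sum_{i}(-1)^i\,\mathrm{Tr}\bigl(g^{-1}F \mid H^i_c(X_{\kbar},\mathbb{Q}_\ell)\bigr).
\]
\begin{itemize}
\item Since $X$ is geometrically irreducible of dimension $d$, $H^{2d}_c$ is one-dimensional. On it $F$ acts by $q^d$, and $g$ acts trivially because $g$ is an automorphism of the irreducible variety $X_{\kbar}$ and so preserves its fundamental class. This term contributes exactly $q^d$.
\item By Deligne's Weil II, the eigenvalues of $F$ on $H^i_c$ for $i\le 2d-1$ have absolute value at most $q^{i/2}\le q^{d-1/2}$.
\item $g$ has finite order and commutes with $F$, so $g^{-1}F$ has eigenvalues of the same absolute values.
\end{itemize}
Together these give $|N_g(q)-q^d|\le B\,q^{d-1/2}$, where $B=\sum_i \dim H^i_c(X_{\kbar},\mathbb{Q}_\ell)$.

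\textbf{Step 3 (uniformity and conclusion).} The constant $B$ depends only on $X_{\kbar}$, so it is independent of $q=p^r$ and of $g$. Multiplying by $|C|/|G|$ yields the statement.

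The main obstacle is justifying Step 2 carefully. One must check that $g^{-1}F$ really is the Frobenius of an $\mathbb{F}_q$-form of $X$ (or apply the trace formula directly to the correspondence $g^{-1}F$), and that the Betti numbers are finite and independent of the extension; quasi-projectivity and separatedness are used here. If descending the twist is awkward, I would first try the alternative of working on $Y$ with the lisse sheaf $f_*\mathbb{Q}_\ell$. Decomposing it into isotypic pieces via the characters of $G$ and applying the trace formula to each piece gives the same main term from the trivial character. The remaining characters contribute $O(q^{d-1/2})$, because geometric irreducibility of $X$ forces $H^{2d}_c$ of the nontrivial pieces to vanish.
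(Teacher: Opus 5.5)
The paper does not prove this theorem. It is quoted from \cite{meagher} and used as a black box, so there is no internal argument to compare yours against.

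Your proposal is correct and is the standard proof.
\begin{itemize}
\item Step 1 is right: in a fiber whose Frobenius class is $C$, the solutions of $F(x)=gx$ form a coset of $Z_G(g)$. This gives $\#\{y:\Frob_q(y)\in C\}=\frac{|C|}{|G|}N_g(q)$.
\item Step 2 is right: the Grothendieck--Lefschetz formula for $g^{-1}F$ gives the main term $q^d$ from $H^{2d}_c$, on which $g$ acts trivially because $X_{\kbar}$ is irreducible. Deligne's weight bounds handle $H^i_c$ for $i<2d$.
\end{itemize}

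The obstacle you flagged is not a real one. The automorphism $g$ has finite order $m$ and commutes with Frobenius, so $\sigma\mapsto g$ defines a cocycle on $\Gal(\mathbb{F}_{q^m}/\mathbb{F}_q)$. The resulting descent datum is effective because $X$ is quasi-projective, and the twist $X_g$ has Frobenius acting on $X(\kbar)$ as $g^{\pm1}F$. Alternatively, you can apply the Deligne--Lusztig fixed-point formula for $gF$ directly. Your sheaf-theoretic fallback via the isotypic pieces of $f_*\mathbb{Q}_\ell$ also works.

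Your write-up makes explicit two points the citation leaves implicit.
\begin{itemize}
\item $G$ must act faithfully by $k$-automorphisms. This is what makes $F$ commute with $G$. Together with étaleness and integrality of $X$, it also forces the geometric fibers to be $G$-torsors.
\item Your error constant is $B=\sum_i\dim H^i_c(X_{\kbar},\mathbb{Q}_\ell)$, rather than a Lang--Weil constant attached to each twist. This makes it visibly uniform over all extensions $\mathbb{F}_q/k$ and all $g\in G$. That is exactly the uniformity the paper needs when it lets $q=p^r\to\infty$.
\end{itemize}

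Finally, summing your identity over all conjugacy classes recovers $|Y(\mathbb{F}_q)|=q^d+O(q^{d-1/2})$ with the same constant. The paper invokes this estimate immediately afterwards when it divides through by $|Y(\mathbb{F}_q)|$.
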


We apply this theorem to the parameter space we construct as a subset of the moduli space of good conic bundles over $\mathbb{F}_q$ with $n$ degenerate fibers. We use this as the space from which we will randomly sample conic bundles, which correspond to $\mathbb{F}_q$-points in this moduli space, to determine what proportion are rational. 

In particular, dividing by $|Y(\mathbb{F}_q)|$ and using the Weil bounds
\[
|Y(\mathbb{F}_q)| = q^d + O(q^{d-1/2}),
\]
we obtain
\[
\frac{\#\{ y \in Y(\mathbb{F}_q) :
\mathrm{Frob}_q(y) \in C \}}
{|Y(\mathbb{F}_q)|}
=
\frac{|C|}{|G|}
+
O(q^{-1/2}).
\]

Thus, as $q \to \infty$ with characteristic $p$ fixed,
Frobenius elements become equidistributed among the
conjugacy classes of $G$. As a consequence, the asymptotic proportion of rational conic bundles with $n$ degenerate fibers is equal to the proportion of elements (conjugacy class generators) in $W(D_n)$ satisfying the cohomological vanishing condition.

For the mixed characteristic setting, we use the following formulation of Chebotarev's density theorem to achieve a similar result.

\begin{lemma}{\cite[Lemma 1.2]{ekedahl}}\label{thm:ekedahl}
Let $\pi : X \to \spec R$ be a morphism of finite type, where $R$ is a number ring.
Let $\rho : Y \to X$ be an étale Galois cover with Galois group $G$, and suppose that $\pi \circ \rho$ has geometrically irreducible generic fiber.
Let $C \subset G$ be a conjugacy class and set
\[
c := \frac{|C|}{|G|}.
\]
For each prime $\mathfrak{p} \in \text{max}(R)$,
let $t(\mathfrak{p})$ denote the number of
$x \in X(\kappa(\mathfrak{p}))$
for which the Frobenius element $\Frob_{\mathfrak{p}}(x)$
lies in $C$.
Then
\[
\frac{t(\mathfrak{p})}{|X(\kappa(\mathfrak{p}))|}
-
c
=
O\!\left(|\kappa(\mathfrak{p})|^{-1/2}\right).
\]
\end{lemma}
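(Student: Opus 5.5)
The statement is a Chebotarev density theorem for a family over $\spec R$ with an error term uniform in $\mathfrak{p}$. The plan is to reduce it to the Lang--Weil estimate applied to twists of the cover $Y \to X$, and then to argue that the Lang--Weil constants can be chosen independently of $\mathfrak{p}$ because everything is of finite type over $R$.

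\emph{Step 1: generic reductions.} Let $d$ be the relative dimension of the generic fiber of $\pi\circ\rho$; since $\rho$ is finite étale, $X_\eta$ has the same dimension. Since $Y_\eta$ is geometrically irreducible and $\rho$ is surjective, $X_\eta$ is also geometrically irreducible. By the constructibility results of EGA IV (geometric irreducibility and dimension of fibers are constructible conditions), there is a nonempty open $U\subset \spec R$ over which every fiber $Y_{\mathfrak{p}}$, $X_{\mathfrak{p}}$ is geometrically irreducible of dimension $d$. The finitely many closed points outside $U$ are absorbed into the implied constant, since for them the difference in question is bounded by $1+c$. We may also shrink $X$ and $Y$ so that they are flat over $U$; the removed closed subsets have fibers of dimension $<d$ and change point counts by $O(q^{d-1})$.

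\emph{Step 2: counting by twists.} Fix $\mathfrak{p}\in U$ and write $q=|\kappa(\mathfrak{p})|$ and $F$ for the $q$-Frobenius. For $h\in G$, let $N_h$ be the number of $y\in Y_{\mathfrak{p}}(\overline{\kappa(\mathfrak{p})})$ with $F(y)=h\cdot y$. This $N_h$ equals $|Y^{h}(\kappa(\mathfrak{p}))|$, where $Y^h$ is the twist of $Y_{\mathfrak{p}}$ by the cocycle determined by $h$. The twist $Y^h$ is again geometrically irreducible of dimension $d$, since it becomes isomorphic to $Y_{\mathfrak{p}}$ over $\overline{\kappa(\mathfrak{p})}$.

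For a fixed $x\in X(\kappa(\mathfrak{p}))$, the fiber $\rho^{-1}(x)$ has $|G|$ geometric points. Each such $y$ satisfies $F(y)=h y$ for a unique $h$, and this $h$ lies in the conjugacy class $\Frob_{\mathfrak{p}}(x)$. Hence
\[
t(\mathfrak{p})=\frac{1}{|G|}\sum_{h\in C}N_h .
\]
Lang--Weil gives $N_h=q^d+O(q^{d-1/2})$ and $|X(\kappa(\mathfrak{p}))|=q^d+O(q^{d-1/2})$. Dividing, we obtain
\[
\frac{t(\mathfrak{p})}{|X(\kappa(\mathfrak{p}))|}=\frac{|C|}{|G|}+O(q^{-1/2}).
\]

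\emph{Step 3: uniformity, the main obstacle.} The implied constants must not depend on $\mathfrak{p}$. I would obtain this through the Grothendieck--Lefschetz trace formula together with Deligne's weight bounds. The error in the count for a geometrically irreducible variety of dimension $d$ is bounded by $\sum_{i<2d}\dim H^i_c(-,\mathbb{Q}_\ell)\,q^{i/2}$. The top-degree term contributes exactly $q^d$ because the variety is geometrically irreducible.

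It then remains to show that the total compactly supported Betti number of the geometric fibers is bounded uniformly over $U$. This follows from the constructibility of $R(\pi\rho)_!\mathbb{Q}_\ell$ and $R\pi_!\mathbb{Q}_\ell$ (Deligne, SGA $4\tfrac12$, Th.\ finitude), after shrinking $U$ again. The twists $Y^h$ have the same geometric cohomology as $Y_{\mathfrak{p}}$, so the same bound applies to them.

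As an alternative to the cohomological argument, one could invoke the Lang--Weil bound in the form whose constant depends only on the dimension, degree, and ambient projective space. These data are bounded in a family of finite type over $R$ after a fixed quasi-projective embedding over $U$.
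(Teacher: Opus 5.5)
The paper gives no proof of this lemma; it is quoted from Ekedahl and used as a black box, so there is no internal argument to compare yours with. Judged on its own, your proposal is correct, and it is the standard proof of this uniform Chebotarev estimate.

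Checking the key steps:
\begin{itemize}
\item The identity $t(\mathfrak p)=\frac{1}{|G|}\sum_{h\in C}N_h$ holds. Since $y\mapsto ky$ identifies the $h$-points with the $khk^{-1}$-points, $N_h$ is constant on $C$, so this equals $\frac{|C|}{|G|}N_g$ for any $g\in C$.
\item The twists $Y^h$ are geometrically isomorphic to $Y_{\mathfrak p}$. Grothendieck--Lefschetz and Deligne's weight bounds therefore give $N_h=q^d+O(Bq^{d-1/2})$, with $B$ a bound on the total compactly supported Betti number.
\item Constructibility of $R(\pi\rho)_!\mathbb{Q}_\ell$ and $R\pi_!\mathbb{Q}_\ell$, together with proper base change, makes $B$ independent of $\mathfrak p$.
\end{itemize}
An equivalent bookkeeping expands the indicator function of $C$ in irreducible characters of $G$. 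One then applies the trace formula to the corresponding lisse sheaves on $X$. Geometric irreducibility of $Y_{\mathfrak p}$ kills $H^{2d}_c$ for every nontrivial character. Your twist formulation and this one are interchangeable.

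A few small points need tidying.
\begin{itemize}
\item Fix $\ell$ and discard the finitely many $\mathfrak p\mid\ell$, or use a second prime $\ell'$.
\item $Rf_!$ requires separated morphisms. Pass to a dense open affine and bound the complement by $O(q^{d-1})$ uniformly, for instance by the same Betti-number argument.
\item The ratio in the statement is only defined once $X(\kappa(\mathfrak p))\neq\emptyset$. Your estimate guarantees this for $N(\mathfrak p)$ large, and the finitely many small primes have to be excluded or handled by convention.
\item Your closing ``alternative'' (Lang--Weil with constants depending on degree and embedding dimension) does not immediately cover the twists $Y^h$. These are not fibers of a fixed $R$-scheme, so the ``bounded in a family'' remark does not apply to them. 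You would need a $G$-linearized projective embedding of $Y$. Then $Y^h$ lies in a twist of $\mathbb{P}^N$, which is again $\mathbb{P}^N$ by Hilbert 90, with the same degree.
\end{itemize}
The cohomological route you give first avoids this issue entirely, since only the geometric cohomology of $Y^h$ enters.
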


\begin{remark} The statement can be checked by passing to open subsets of X,Y compatible with $\rho$ at cost of changing the constant and error term. They will still be independent of q.
\end{remark}

\begin{prop}\label{prop:myprop}
Let $K$ be a number field with ring of integers $\mathcal{O}_K$. Let
\[
\rho : Y \to X
\]
be a generically finite Galois cover of normal quasiprojective varieties $X$ and $Y$ defined
over $K$ with $Y$ geometrically integral. Let $G$ be the Galois group of the cover. Let $C \subset G$
be a conjugacy class and set
\[
c := \frac{|C|}{|G|}.
\]

Then there exists a finite set $T \subset \text{max} \left(\mathcal{O}_K \right)$  such that for
all $p \notin T$,
\[
\frac{t(\mathfrak{p})}{|X(\kappa(\mathfrak{p}))|} - c
= O\!\left(N(\mathfrak{p})^{-1/2}\right),
\]
where
\[
t(p) := \#\{x \in X(\kappa(\mathfrak{p})) : \mathrm{Frob}_\mathfrak{p}(x) \in C\},
\]
$\kappa(p)$ denotes the residue field at $\mathfrak{p}$, $N(\mathfrak{p}) = |\kappa(\mathfrak{p})|$ is the
norm of $\mathfrak{p}$, and $\mathrm{Frob}_\mathfrak{p}(x)$ denotes the Frobenius conjugacy class
associated to $x$.
\end{prop}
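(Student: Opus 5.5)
The plan is to reduce Proposition~\ref{prop:myprop} to Lemma~\ref{thm:ekedahl} of Ekedahl. Two things have to be done: spread the cover out over an open part of $\spec \mathcal{O}_K$, and replace the generically finite cover by a finite étale Galois cover on a dense open subset. The set $T$ will consist of the finitely many primes we must discard along the way.

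First we work over $K$. Since $\rho$ is a generically finite Galois cover with group $G$, there is a dense open $U \subset X$ such that $V := \rho^{-1}(U) \to U$ is finite étale and Galois with group $G$. Here we use normality of $X$ and $Y$, so that $Y$ is the normalization of $X$ in the function field $K(Y)$. Then $G$ acts on $Y$, and $\rho$ is finite over the locus where it is quasi-finite; removing the branch locus gives the étale part. Because $Y$ is geometrically integral, $V_{\kbar}$ is irreducible.

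Next we spread out. Choose a finite set $T_0$ of primes and models $\mathcal{U}, \mathcal{V}$ over $R = \mathcal{O}_K[1/T_0]$ of finite type, with a $G$-action on $\mathcal{V}$ such that $\mathcal{V} \to \mathcal{U}$ is finite étale Galois with group $G$. This is possible by standard limit arguments (EGA IV, §8), since finite étaleness, the $G$-action and the Galois condition (the map $G \times \mathcal{V} \to \mathcal{V}\times_{\mathcal{U}}\mathcal{V}$ being an isomorphism) all descend to some localization. Do the same for models $\mathcal{X} \supset \mathcal{U}$ of $X$, with complement $\mathcal{Z}$ of relative dimension $< d = \dim X$. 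The generic fiber of $\mathcal{V} \to \spec R$ is $V$, which is geometrically irreducible, so Lemma~\ref{thm:ekedahl} applies to $\mathcal{U}$. For $\mathfrak{p}$ with $\kappa(\mathfrak{p})$ of size $N(\mathfrak{p})$, it gives that $t_{\mathcal{U}}(\mathfrak{p})/|\mathcal{U}(\kappa(\mathfrak{p}))| - c = O(N(\mathfrak{p})^{-1/2})$.

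It remains to pass from $\mathcal{U}$ back to $X$; this is the step I expect to be the main technical obstacle. The Frobenius class $\mathrm{Frob}_{\mathfrak{p}}(x)$ is only defined for $x$ in the étale locus, so $t(\mathfrak{p})$ should be interpreted as counting points of $U$, consistent with the preceding Remark. We then need uniform bounds
\[
|\mathcal{Z}(\kappa(\mathfrak{p}))| = O(N(\mathfrak{p})^{d-1}), \qquad |\mathcal{U}(\kappa(\mathfrak{p}))| = N(\mathfrak{p})^{d} + O(N(\mathfrak{p})^{d-1/2}),
\]
with constants independent of $\mathfrak{p}$. The first is Lang--Weil, with constants depending only on the degree and dimension of the fibers; these are bounded uniformly after enlarging $T_0$ so that the fibers have constant dimension and bounded complexity (generic flatness). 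The second is also Lang--Weil, using that $\mathcal{U}_{\mathfrak{p}}$ is geometrically irreducible for all $\mathfrak{p}$ outside a finite set, since geometric irreducibility of fibers is a constructible condition and holds at the generic point. Enlarging $T_0$ to a finite $T$ accordingly, we get
\[
\frac{t(\mathfrak{p})}{|X(\kappa(\mathfrak{p}))|} = \frac{t_{\mathcal{U}}(\mathfrak{p})}{|\mathcal{U}(\kappa(\mathfrak{p}))|}\cdot \frac{|\mathcal{U}(\kappa(\mathfrak{p}))|}{|\mathcal{U}(\kappa(\mathfrak{p}))|+|\mathcal{Z}(\kappa(\mathfrak{p}))|}.
\]
The second factor is $1 + O(N(\mathfrak{p})^{-1})$, so the ratio equals $c + O(N(\mathfrak{p})^{-1/2})$, as claimed.
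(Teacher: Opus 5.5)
Your proposal is correct and follows the same route as the paper. You restrict to a dense open $U\subset X$ over which $\rho$ is finite \'etale Galois, spread out over $\mathcal{O}_K[1/T]$, apply Lemma~\ref{thm:ekedahl}, and discard the complement; you usefully make that last step explicit with uniform Lang--Weil bounds, where the paper only appeals to its remark about open subschemes (and the paper's graph-closure model $\mathcal{Y}\to\mathcal{X}$ is not really needed).

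One justification needs fixing. When $\rho$ is only generically finite, normality alone does not make $Y$ the normalization of $X$ in $K(Y)$, nor does it make $\rho$ finite over its quasi-finite locus (think of a blowup, or an open immersion). So first shrink $X$ to a dense open over which $\rho$ is finite, which is possible for any dominant generically finite morphism; after that your argument for the $G$-action and the \'etale locus goes through.
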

\begin{proof}
Since $X$ and $Y$ are normal and quasiprojective over $K$, we may choose
projective models $\overline X$ and $\overline Y$ over $\mathcal O_K$ whose
generic fibers contain $X$ and $Y$ as dense open subschemes. Replacing these
models by their normalizations, we may assume that $\overline X$ and
$\overline Y$ are normal.

The morphism $\rho:Y\to X$ induces a rational map $
\overline \rho:\overline Y \dashrightarrow \overline X$.  Let $\Gamma\subset \overline Y\times_{\mathcal O_K}\overline X$ be the closure
of the graph of $\rho$, and let $\mathcal Y$ be the normalization of $\Gamma$.
Then $\mathcal Y$ is projective over $\mathcal O_K$, and the projection to
$\overline X$ gives a morphism
\[
\mathcal Y\to \overline X
\]
extending $\rho$ on the generic fiber. Replacing $\overline X$ by a
normal model $\mathcal X$ of $X$, we get a morphism
\[
\mathcal Y\to \mathcal X
\]
whose generic fiber is $\rho:Y\to X$. Since $\rho$ is generically finite, after
discarding finitely many primes of $\mathcal O_K$, we may assume that $\mathcal Y\to \mathcal X$ is generically finite.

Now let $R=\mathcal O_K[S^{-1}]$ for a finite set of primes $S$ large enough so
that $\mathcal X$ and $\mathcal Y$ are flat over $R$ and the above morphism is a
morphism of $R$-schemes. Since $\rho$ is a generically finite Galois cover with
group $G$, there exists a dense open subset $U\subset X$ such that, setting
$V:=\rho^{-1}(U)$, the morphism
\[
V\to U
\]
is a finite \'etale Galois cover with Galois group $G$. Thus Lemma~\ref{thm:ekedahl} applies to the finite \'etale Galois cover
\[
\mathcal V\to \mathcal U .
\]
Hence, for every prime $\mathfrak p\notin S$,
\[
\frac{t_{\mathcal U}(\mathfrak p)}
{|\mathcal U(\kappa(\mathfrak p))|}
-
c
=
O(N(\mathfrak p)^{-1/2}),
\]
where
\[
t_{\mathcal U}(\mathfrak p)
=
\#\{x\in \mathcal U(\kappa(\mathfrak p)):
\operatorname{Frob}_{\mathfrak p}(x)\in C\}.
\]
Since we can check Lemma ~\ref{thm:ekedahl} on open subschemes, this is sufficient.


\end{proof}

\begin{remark}
    From the results in section ~\ref{section:2.4}, we know there exists a geometrically connected $W(D_n)$-cover  of the moduli stack of uniform good conic bundles with
$n$ degenerate fibers. For each $\mathbb{F}_q$ point  the Frobenius automorphism determines a conjugacy class
\[
[\mathrm{Frob}_q(x)] \subset W(D_n).
\]
\end{remark}





\section{Probabilities}\label{section:3}

We now count the elements of \(W(D_n)\) that satisfy the rationality
condition described in Proposition~\ref{prop:yang}. Recall that such an
element has one of the following three signed cycle types:
\[
\begin{array}{ll}
\text{\emph{Type 1}:} & g=c_1c_2\beta_3\cdots\beta_R,\\[2mm]
\text{\emph{Type 2}:} & g=c_1c_2(23)\beta_3\cdots\beta_R,\\[2mm]
\text{\emph{Type 3}:} & g=\beta_1\cdots\beta_R,
\end{array}
\]
where every $\beta_i$ is a positive signed cycle. Thus, Type 3 has
only positive cycles, Type 1 has two negative fixed points, and Type 2
has one negative $2$-cycle together with one negative fixed point. The remaining cycles are positive in each case.

We will count the number of elements within each subtype that generate cyclic subgroups of $W(D_n)$ which satisfy the $(*)$ condition. This will involve the enumeration of positive signed permutations. 

\begin{definition}
    The \textbf{unsigned Stirling numbers of the first kind} ${ n\brack k}$ count the number of permutations with exactly $k$ cycles (counting fixed points as cycles of length one).
\end{definition}

These numbers satisfy the following useful identity.

\begin{fact} \label{fact:23}
$(x + (n-1))(x +(n-2)) \cdots (x+2)(x+1)x = \sum_{k=1}^n { n\brack k} x^k$
\end{fact}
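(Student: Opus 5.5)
We prove the rising-factorial identity by induction on $n$, directly from the combinatorial definition of ${n\brack k}$. Write $x^{\overline{n}} := x(x+1)\cdots(x+n-1)$ for the left-hand side. The key input is the standard recurrence
\[
{n\brack k} = (n-1){n-1\brack k} + {n-1\brack k-1}, \qquad n\ge 1,
\]
with the conventions ${0\brack 0}=1$ and ${n\brack 0}={0\brack k}=0$ for $n,k\ge 1$.

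To establish the recurrence, take a permutation of $\{1,\dots,n\}$ with exactly $k$ cycles and look at the element $n$.
\begin{itemize}
\item If $n$ is a fixed point, deleting it leaves a permutation of $\{1,\dots,n-1\}$ with $k-1$ cycles. There are ${n-1\brack k-1}$ of these.
\item Otherwise $n$ sits in a cycle of length at least two. Deleting $n$ from its cycle (that is, sending $\sigma^{-1}(n)\mapsto\sigma(n)$) gives a permutation of $\{1,\dots,n-1\}$ with $k$ cycles.
\item Conversely, $n$ can be reinserted into such a permutation immediately after any of the $n-1$ elements, and each choice yields a distinct preimage. So this case contributes $(n-1){n-1\brack k}$.
\end{itemize}
This bijective bookkeeping is the only step needing care: one must check that insertion and deletion are mutually inverse.

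For the induction, the base case $n=1$ reads $x={1\brack 1}x$. Assume the identity holds for $n-1$. Then
\[
x^{\overline{n}} = (x+n-1)\,x^{\overline{n-1}} = (x+n-1)\sum_{k} {n-1\brack k}x^k .
\]
Expanding and collecting the coefficient of $x^k$ gives $(n-1){n-1\brack k} + {n-1\brack k-1}$, which equals ${n\brack k}$ by the recurrence. This proves the identity.

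In the paper this fact will be used with $x$ replaced by a weight such as $1/2$, in order to sum over permutations weighted by $x^{\#\text{cycles}}$. That is how products of the form $(2n-1)!!/2^n$ arise in the main counting.
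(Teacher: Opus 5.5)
Your proof is correct. The deletion/reinsertion bookkeeping proves ${n\brack k}=(n-1){n-1\brack k}+{n-1\brack k-1}$. Multiplying the inductive hypothesis by $(x+n-1)$ and reading off the coefficient of $x^k$ gives exactly that recurrence, and the conventions ${n-1\brack 0}=0$ and ${n-1\brack n}=0$ handle the two ends of the sum.

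The paper gives no proof of this identity. It quotes it as a standard fact about unsigned Stirling numbers and uses it only once, at $x=\tfrac12$, to get $\sum_k {n\brack k}2^{n-k}=2^n\prod_{j=0}^{n-1}(j+\tfrac12)=(2n-1)!!$ in the Type 3 count. So your induction supplies the standard argument the paper omits, and your closing remark correctly describes how the fact is used.
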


Since we will overall be interested in asymptotic behavior of the proportions we will compute, we recall a standard result on the asymptotics of double factorials. 

\begin{prop}\label{prop:factorials}
    For $n$ even, 
    \[n!! \sim \sqrt{\pi n} \left( \frac{n}{e} \right)^{n/2}.\]
    For $n$ odd, 
    \[ n!! \sim \sqrt{2 n} \left( \frac{n}{e} \right)^{n/2} \]
\end{prop}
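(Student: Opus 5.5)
The plan is to reduce both statements to Stirling's formula $m! \sim \sqrt{2\pi m}\,(m/e)^m$ by writing the double factorial in terms of ordinary factorials.

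\emph{Even case.} Write $n = 2m$. Pulling a factor of $2$ out of each term gives
\[
n!! = 2^m\, m!.
\]
Stirling then yields
\[
n!! \sim 2^m \sqrt{2\pi m}\,\left(\tfrac{m}{e}\right)^m = \sqrt{2\pi m}\,\left(\tfrac{2m}{e}\right)^m .
\]
Substituting $2\pi m = \pi n$ and $(2m/e)^m = (n/e)^{n/2}$ gives the claimed $\sqrt{\pi n}\,(n/e)^{n/2}$.

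\emph{Odd case.} Write $n = 2m+1$. The odd factors of $n!$ are what remain after dividing by the even ones, so
\[
n!! = \frac{n!}{(n-1)!!} = \frac{n!}{2^m\, m!}.
\]
Apply Stirling to the numerator. For the denominator, use the even case with $n-1 = 2m$ in place of $n$, so that $2^m m! \sim \sqrt{\pi(n-1)}\,((n-1)/e)^{(n-1)/2}$. The prefactors give
\[
\frac{\sqrt{2\pi n}}{\sqrt{\pi(n-1)}} \to \sqrt{2}.
\]
The exponential parts give
\[
\frac{(n/e)^n}{((n-1)/e)^{(n-1)/2}} = \left(\tfrac{n}{e}\right)^{(n+1)/2}\left(1+\tfrac{1}{n-1}\right)^{(n-1)/2}.
\]
Multiplying these together and rewriting $(n/e)^{(n+1)/2} = \sqrt{n/e}\,(n/e)^{n/2}$ produces $\sqrt{2n}\,(n/e)^{n/2}$, provided the remaining factor behaves correctly.

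The main obstacle is this remaining factor, and it is a standard limit:
\[
\left(1+\tfrac{1}{n-1}\right)^{(n-1)/2} \to e^{1/2}.
\]
This $e^{1/2}$ exactly cancels the $e^{-1/2}$ coming from $\sqrt{n/e}$. Since every step is a product or quotient of asymptotic equivalences with nonzero limits, the relation $\sim$ is preserved throughout, and both cases follow.
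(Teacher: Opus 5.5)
Your argument is correct: the even case follows from $n!! = 2^m m!$ and Stirling. In the odd case, the prefactor ratio $\sqrt{2}$ and the limit $\bigl(1+\frac{1}{n-1}\bigr)^{(n-1)/2}\to e^{1/2}$ combine exactly as you say to give $\sqrt{2n}\,(n/e)^{n/2}$. The paper gives no proof and simply recalls the statement as a standard fact, so your reduction to Stirling's formula is the standard derivation it implicitly relies on.
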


We will now use these facts for our enumeration, beginning with elements belonging to type 3. 

\subsection{Generators of Type 3}
We first count elements of Type 3, i.e.
\[
g = \beta_1 \cdots \beta_k
\]
where each cycle is positive.

Let $n$ be an integer and let $g = \beta_1 \cdots \beta_k \in W(D_n)$ where consisting of $k$ disjoint permutation cycles $\beta_i$, each with $\sigma(\beta_i) = 1$. 

Then there are ${ n\brack k}$ ways to write the underlying cycle decomposition in $S_n$. Then for each $\beta_i$ with length $\ell_i$, there are $2^{\ell_i -1}$ ways to assign it an even number of -1 signs (putting $c_i$ in front). So there are $2^{n -k}$ ways to assign the -1 signs for each $g$. 

\begin{lemma}
    The number of signed permutations in $W(D_n)$ where each cycle has an even number of $c_j$ terms associated to it is given by \[ \sum_{k=1}^n { n\brack k} 2^{n-k} = (2n-1)!!\]
\end{lemma}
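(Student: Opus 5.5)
The plan is to prove the two equalities separately. First, the count of signed permutations all of whose cycles are positive equals $\sum_k {n\brack k}2^{n-k}$. Second, this sum equals $(2n-1)!!$.

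For the count, fix an underlying permutation $\tau\in S_n$ with $k$ cycles of lengths $\ell_1,\dots,\ell_k$, where $\sum \ell_i = n$. A signed permutation lying over $\tau$ is determined by a subset of indices receiving a $c_j$. The cycles are disjoint, so the sign choices on different cycles are independent. On a cycle of length $\ell_i$, exactly half of the $2^{\ell_i}$ subsets have even size, giving $2^{\ell_i-1}$ choices. Multiplying gives $2^{\sum(\ell_i-1)}=2^{n-k}$ signed permutations over $\tau$, and each of them automatically lies in $W(D_n)$ because the total number of $c_j$ is even. Summing over the ${n\brack k}$ permutations with $k$ cycles, and then over $k$, gives the first expression. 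One should check that the disjoint signed cycle decomposition is unique, so that each element is counted exactly once; this is exactly the description recalled before Proposition 25 of Carter.

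For the identity, I would use Fact~\ref{fact:23}. Write
\[
\sum_{k=1}^n {n\brack k}2^{n-k}=2^n\sum_{k=1}^n {n\brack k}\left(\tfrac12\right)^k .
\]
By Fact~\ref{fact:23} with $x=\tfrac12$, this equals
\[
2^n\prod_{j=0}^{n-1}\left(\tfrac12+j\right)=\prod_{j=0}^{n-1}(2j+1)=(2n-1)!!.
\]

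As a sanity check, $n=2$ gives $1\cdot 2+1\cdot 1=3=3!!$. I expect no real obstacle. The only point needing care is the bookkeeping in the first step: the count $2^{\ell_i-1}$ of even sign assignments per cycle must be valid for every cycle length, including fixed points ($\ell_i=1$), where the only choice is no sign. It is also worth noting that the result agrees with the known value $\binom{2n}{n}n!/2^n$.
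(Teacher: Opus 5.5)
Your proposal is correct and follows the paper's own argument. The paper likewise counts $2^{\ell_i-1}$ even sign assignments on each cycle, giving $2^{n-k}$ elements over each permutation with $k$ cycles, and then evaluates the identity in Fact~\ref{fact:23} at $x=\tfrac12$, rescaled by $2^n$, to get $(2n-1)!!$; your write-up simply spells out these steps in more detail.
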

\begin{proof}
    Plug in $x=\frac{1}{2}$ to the identify in Fact ~\ref{fact:23}
\end{proof}

Thus, we deduce that the number of elements of $W(D_n)$ belonging to a conjugacy class with a generator of Type 3 is precisely $(2n-1)!!$.

\begin{corollary}
    The number of Type 3 elements in $W(D_n)$ is $(2n-1)!!$.
\end{corollary}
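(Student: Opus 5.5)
The plan is to count Type 3 elements directly: these are the elements of $W(D_n)$ all of whose signed cycles are positive. I will stratify them by the underlying permutation $pr(g) \in S_n$ and count sign assignments over each permutation.

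\textbf{Step 1.} Fix $\tau = pr(g)$ with disjoint cycles $\tau_1,\dots,\tau_k$ of lengths $\ell_1,\dots,\ell_k$, where $\sum \ell_i = n$. A signed permutation over $\tau$ is a choice of a subset of indices $j$ carrying a $c_j$. By the signed cycle decomposition recalled earlier, the sign of the cycle over $\tau_i$ is $(-1)^{s_i}$, where $s_i$ is the number of chosen indices in the support of $\tau_i$. So $g$ is of Type 3 exactly when every $s_i$ is even. This automatically makes the total number of sign changes even, so $g \in W(D_n)$ with no extra condition. A subset of an $\ell_i$-element set of even size can be chosen in $2^{\ell_i-1}$ ways, and the choices for different cycles are independent. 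Hence $\tau$ contributes $\prod_i 2^{\ell_i-1} = 2^{n-k}$ elements, depending only on the number $k$ of cycles.

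\textbf{Step 2.} Sum over $\tau$. By the definition of the Stirling numbers, exactly ${n \brack k}$ permutations have $k$ cycles, so the total is $\sum_{k=1}^n {n\brack k}2^{n-k}$.

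\textbf{Step 3.} Evaluate the sum by applying Fact~\ref{fact:23} with $x = 1/2$ and multiplying by $2^n$:
\[
2^n\sum_{k} {n\brack k}2^{-k} = \prod_{j=0}^{n-1}\left(2\cdot\tfrac12 + 2j\right) = \prod_{j=0}^{n-1}(2j+1) = (2n-1)!!.
\]
This is exactly the preceding lemma, so the corollary follows from it together with the identification of Type 3 elements in Step 1.

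The only real point needing care is Step 1. It must be justified that positivity of a signed cycle is exactly the parity of the $c_j$'s supported on that cycle; this holds because conjugating within the cycle moves signs along it without changing their parity, which matches the paper's definition of $\sigma$ applied cycle by cycle. It must also be justified that the decomposition into disjoint signed cycles is unique, so that no element is counted twice. The rest of the argument is the routine product computation above.
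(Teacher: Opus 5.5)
Your proposal is correct and follows the paper's argument essentially verbatim: the paper likewise counts $2^{\ell_i-1}$ even sign assignments per cycle to get $2^{n-k}$ elements over each permutation with $k$ cycles, sums against ${n\brack k}$, and evaluates the sum by setting $x=\tfrac12$ in Fact~\ref{fact:23}. Your added observation that requiring an even number of signs on every cycle automatically places $g$ in $W(D_n)$ is a small clarification that the paper leaves implicit.
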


\subsection{Generators of Type 1}
Now we count the number of type 1 elements in $W(D_n)$. Type 1 elements are of the form $g = c_1 c_2 \beta_3 \cdots \beta_R$.

There are $\binom{n}{2}$ choices for the $2$-cycle.
On the remaining $n-2$ elements we choose
a positive signed permutation, of which there are \[
(2(n-2)-1)!! = (2n-5)!!
\]
possibilities.

\begin{lemma}
The number of Type 1 elements in $W(D_n)$ is
\[
\binom{n}{2}(2n-5)!!.
\]
\end{lemma}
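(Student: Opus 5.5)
The plan is to count Type 1 elements directly as signed permutations in $W(D_n)$, by splitting each such element into its two negative fixed points and its restriction to the remaining indices, and then applying the Type 3 count on that complement.

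\emph{Step 1: the structure of a Type 1 element.} A Type 1 element $g=c_ic_j\beta_3\cdots\beta_R$ has exactly two negative cycles, and both are fixed points. So $g$ sends $i^\pm\mapsto i^\mp$ and $j^\pm\mapsto j^\mp$. On the complementary $n-2$ indices, $g$ restricts to a signed permutation in which every cycle carries an even number of $c$'s. The total number of $c$'s is then even, so $g$ lies in $W(D_n)$ automatically.

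\emph{Step 2: a bijection.} I will set up the map
\[
g \longmapsto \Bigl(\{i,j\},\; g|_{\{1,\dots,n\}\setminus\{i,j\}}\Bigr).
\]
\begin{itemize}
\item It is well defined: the set $\{i,j\}$ of negative fixed points is intrinsic to $g$. This uses the uniqueness of the disjoint signed cycle decomposition recalled before Proposition~25 of Carter.
\item It is injective: $g$ is recovered by adjoining $c_ic_j$ to the restriction.
\item It is surjective onto pairs $(\{i,j\},h)$, where $h$ is an all-positive signed permutation of the remaining $n-2$ indices: adjoining $c_ic_j$ to any such $h$ gives a Type 1 element.
\end{itemize}
Hence the count is $\binom{n}{2}$ times the number of all-positive signed permutations on $n-2$ letters.

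\emph{Step 3: counting the complement.} By the preceding Lemma (evaluating Fact~\ref{fact:23} at $x=\tfrac12$) with $n$ replaced by $n-2$, the number of all-positive signed permutations on $n-2$ letters is $(2(n-2)-1)!!=(2n-5)!!$. This gives the total $\binom{n}{2}(2n-5)!!$.

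\emph{Expected obstacle.} The only subtle point is that the decomposition into negative and positive parts is unique, so that nothing is double counted. In particular, the two negative fixed points must not be absorbable into a positive cycle. This holds because a fixed point with a $c$ is a cycle of length one with an odd number of signs, and the cycle decomposition is unique.

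A minor edge case is $n=2$, where $(-1)!!$ should be read as $1$ by convention, since the empty signed permutation is the only one.
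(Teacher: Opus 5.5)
Your proof is correct and follows the paper's argument. The paper also chooses the two negative fixed points in $\binom{n}{2}$ ways and counts the all-positive signed permutations of the remaining $n-2$ indices as $(2n-5)!!$ via the Type~3 lemma; your explicit bijection and your appeal to uniqueness of the signed cycle decomposition just make rigorous what the paper says in one line, where it loosely calls the chosen pair a ``$2$-cycle''.
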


\subsection{Generators of Type 2}

Type 2 elements are of the form
\[
g = c_1 c_2 (23) \beta_3 \cdots \beta_R,
\].
Note this means there is an odd $2$-cycle and one sign change of another index.
There are $n$ ways to choose the sign change, $\binom{n-1}{2}$ ways to choose the $2$-cycle, 2 ways to then choose which of those two indices to have the corresponding $c_j$ term appear. Then there are $(2n-7)!!$ ways to have the remaining $n-3$ indices permuted via even signed cycles. 

\begin{lemma}
   There are $2n \binom{n-1}{2} (2n-7)!!$ elements of Type 2 in $W(D_n)$.
\end{lemma}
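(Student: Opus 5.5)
The plan is a direct count of the signed permutations of the shape $g=c_ac_b(bd)\beta_3\cdots\beta_R$. Such a $g$ has exactly one negative fixed point, exactly one negative $2$-cycle, and only positive cycles elsewhere. By the proposition of Carter, membership in $W(D_n)$ depends only on the signed cycle type. Here there are exactly two negative cycles, so every such element lies in $W(D_n)$, and the count reduces to counting signed permutations with this cycle type.

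The count proceeds in four stages.
\begin{enumerate}
\item Choose the index $a$ carrying the negative fixed point: there are $n$ choices.
\item From the remaining $n-1$ indices, choose the unordered pair $\{b,d\}$ supporting the $2$-cycle: there are $\binom{n-1}{2}$ choices.
\item Choose the sign decoration making this $2$-cycle negative. A $2$-cycle on $\{b,d\}$ carries a subset of $\{c_b,c_d\}$, and it is negative exactly when that subset has odd size. So the decoration is $c_b(bd)$ or $c_d(bd)$, giving $2$ choices. (This equals $2^{\ell}-2^{\ell-1}$ with $\ell=2$, the number of odd-sign decorations of an $\ell$-cycle.)
\item Choose a signed permutation on the remaining $n-3$ indices all of whose cycles are positive. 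By the Lemma and Corollary of the Type 3 subsection, applied with $n-3$ in place of $n$, there are $(2(n-3)-1)!!=(2n-7)!!$ such choices.
\end{enumerate}

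Multiplying gives $n\cdot\binom{n-1}{2}\cdot 2\cdot(2n-7)!! = 2n\binom{n-1}{2}(2n-7)!!$.

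The main thing to check is that this map from choices to group elements is a bijection. This follows from the uniqueness of the decomposition of a signed permutation into disjoint signed cycles. That uniqueness also gives the reverse direction: from $g$ we recover the unique negative fixed point, the unique negative $2$-cycle with its decoration, and the positive remainder. Hence there is no overcounting, for instance by confusing the roles of $a$ and $\{b,d\}$, because a fixed point and a $2$-cycle are distinguishable.

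For small $n$ the convention $(-1)!!=1$ handles $n=3$. A quick sanity check is $n=3$: the formula gives $2\cdot 3\cdot 1\cdot 1=6$, which matches the direct count of $3$ choices of $a$ times $2$ decorations of the remaining transposition.
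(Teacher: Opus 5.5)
Your proposal is correct and follows the paper's count exactly: $n$ choices for the negative fixed point, $\binom{n-1}{2}$ choices for the support of the negative transposition, $2$ odd sign decorations on it, and $(2n-7)!!$ all-positive signed permutations on the remaining $n-3$ indices. You also supply two justifications the paper leaves implicit: the parity argument showing these elements lie in $W(D_n)$, and uniqueness of the disjoint signed-cycle decomposition, which rules out overcounting.
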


\section{Proof of Theorem}
We now combine the counting results of Section ~\ref{section:3} with the equidistribution
results of Section ~\ref{section:2.4} to prove Theorem ~\ref{thm:main}

From Section 3, the number of elements of each type in $W(D_n)$ is:

\begin{itemize}

\item Type 1:
\[
\binom{n}{2}(2n-5)!!.
\]
\item Type 2:
\[
2n\binom{n-1}{2}(2n-7)!!.
\]
\item Type 3:
\[
(2n-1)!!.
\]
\end{itemize}

Thus the total number of possible rational generators is
\[
(2n-1)!! + \binom{n}{2}(2n-5)!! + 2n\binom{n-1}{2}(2n-7)!!.
\]
Since $|W(D_n)| = 2^{\,n-1}n!$, the proportion of these elements in $W(D_n)$ is
\[
P_n = \frac{(2n-1)!! + \binom{n}{2}(2n-5)!! + 2n\binom{n-1}{2}(2n-7)!!}{2^{\,n-1}n!}.
\]

By Theorem ~\ref{thm:meagher} and Proposition ~\ref{prop:myprop} (Chebotarev density in fixed and mixed
characteristic), Frobenius elements associated to good conic bundles
become equidistributed in $W(D_n)$ as $q \to \infty$.

Therefore, as $q \to \infty$, the proportion of rational good conic bundles
with $n$ degenerate fibers is asymptotic to
\[
\frac{
(2n-1)!! + \binom{n}{2}(2n-5)!! + 2n\binom{n-1}{2}(2n-7)!!}{2^{\,n-1}n!}.
\]

Thus we have established the primary result of this paper Theorem ~\ref{thm:main}. By applying Lemma ~\ref{prop:factorials}, we get the following corollary.

 \begin{corollary}
     As $n \rightarrow \infty$ this proportion is asymptotic to $\frac{2.5}{\sqrt{\pi n}}$.
 \end{corollary}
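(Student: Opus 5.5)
The plan is to normalize every term of $P_n$ by the Type~3 contribution. Once that term's asymptotics are known, the other two terms become explicit rational multiples of it. Write
\[
P_n = \frac{(2n-1)!!}{2^{n-1}n!}\left(1 + \frac{\binom{n}{2}(2n-5)!!}{(2n-1)!!} + \frac{2n\binom{n-1}{2}(2n-7)!!}{(2n-1)!!}\right).
\]

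\textbf{Step 1: the Type~3 term.} Since $2^n n! = (2n)!!$, we have
\[
\frac{(2n-1)!!}{2^{n-1}n!} = 2\,\frac{(2n-1)!!}{(2n)!!} = 2\,\frac{\binom{2n}{n}}{4^n}.
\]
By Stirling's formula, $\binom{2n}{n}\sim 4^n/\sqrt{\pi n}$, so this term is asymptotic to $2/\sqrt{\pi n}$. This is the same as taking the ratio of the even and odd cases of Proposition~\ref{prop:factorials} at $2n$ and $2n-1$. I would rather quote the central binomial (Wallis) asymptotic directly, since it avoids tracking the constants in the double factorial formulas.

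\textbf{Step 2: the two correction ratios.} Both ratios are rational functions of $n$, because
\[
(2n-1)!! = (2n-1)(2n-3)\,(2n-5)!! \quad\text{and}\quad (2n-1)!! = (2n-1)(2n-3)(2n-5)\,(2n-7)!!.
\]
Hence
\[
\frac{\binom{n}{2}(2n-5)!!}{(2n-1)!!} = \frac{n(n-1)/2}{(2n-1)(2n-3)} \longrightarrow \frac{1}{8},
\]
and
\[
\frac{2n\binom{n-1}{2}(2n-7)!!}{(2n-1)!!} = \frac{n(n-1)(n-2)}{(2n-1)(2n-3)(2n-5)} \longrightarrow \frac{1}{8}.
\]

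\textbf{Step 3: combine.} The bracket therefore tends to $1+\tfrac18+\tfrac18 = \tfrac54$. Multiplying by the Step~1 asymptotic gives
\[
P_n \sim \frac{2}{\sqrt{\pi n}}\cdot\frac54 = \frac{2.5}{\sqrt{\pi n}},
\]
which is the claim.

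The only step with real content is Step~1, the asymptotic for $(2n-1)!!/(2n)!!$. It is a standard Wallis/Stirling estimate. If one instead insists on using Proposition~\ref{prop:factorials} as stated, the thing to check carefully is that the even and odd double factorial asymptotics are applied consistently at $2n$ and $2n-1$.
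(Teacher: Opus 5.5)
Your proposal is correct and is essentially the paper's argument: the paper only says to apply the double-factorial asymptotics of Proposition~\ref{prop:factorials}, and your Wallis estimate $(2n-1)!!/(2n)!! \sim 1/\sqrt{\pi n}$ is exactly that proposition applied at $2n-1$ and $2n$. Factoring out the Type~3 term makes the Type~1 and Type~2 contributions exact rational multiples of it, each tending to $\tfrac18$. This is a clean way to carry the argument out, because it avoids applying the asymptotic separately to $(2n-5)!!$ and $(2n-7)!!$, where shifted exponential factors would otherwise have to be tracked and cancelled.
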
   
\section{Log Convexity}
Table ~\ref{tab:table} and Figure ~\ref{fig:fig1} suggest that the probabilities $P_n$
decrease as $n$ grows, but that they decrease more slowly for larger $n$.
This behavior is captured by log convexity: The successive ratios
\[
\frac{P_{n+1}}{P_n}
\]
form an increasing sequence.

\begin{definition}
    A sequence $\{ a_n\}$ is \textbf{log convex} if and only if it satisfies \[a_n^2 \leq a_{n-1} a_{n+1}\] for all $n$. Equivalently,
\[
\frac{a_{n+1}}{a_n} \geq \frac{a_n}{a_{n-1}}.
\]
\end{definition}

\begin{prop}
   The sequence $\{ P_n\}$ is log convex for $n \geq 3$.

\end{prop}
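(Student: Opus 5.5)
The plan is to reduce log convexity to an explicit polynomial inequality by factoring out the double factorials. Put $a_n=(2n-7)!!$. Then $(2n-5)!!=(2n-5)a_n$ and $(2n-1)!!=(2n-1)(2n-3)(2n-5)a_n$. So the numerator of $P_n$ from Theorem~\ref{thm:main} becomes $a_n$ times a cubic:
\[
P_n=\frac{(2n-7)!!}{2^{n}\,n!}\,Q(n),\qquad Q(n)=20n^3-85n^2+101n-30 .
\]
Here $2Q(n)$ collects $2(2n-1)(2n-3)(2n-5)+n(n-1)(2n-5)+2n(n-1)(n-2)$. The leading coefficient $20$ is consistent with the constant $2.5$ in the corollary, which serves as a sanity check on the algebra.

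Next I compute the successive ratio:
\[
r_n:=\frac{P_{n+1}}{P_n}=\frac{2n-5}{2(n+1)}\cdot\frac{Q(n+1)}{Q(n)} .
\]
Log convexity for $n\ge 3$ means $r_n\ge r_{n-1}$. Since $Q$ has no roots at the relevant integers, this is equivalent to the polynomial inequality
\[
D(n):=n(2n-5)\,Q(n+1)Q(n-1)-(n+1)(2n-7)\,Q(n)^2\;\ge\;0 .
\]
In the range where $Q(n)>0$ and $2n-7>0$, clearing denominators preserves the direction of the inequality. I will expand $D$, a polynomial of degree at most $8$, substitute $n=m+4$, and try to show all coefficients in $m$ are nonnegative. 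If some coefficients are negative, I will instead bound the low-degree terms by the high-degree ones for $m\ge 0$. The finitely many remaining small cases, together with the boundary case $n=3$, will be checked directly.

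For $n=3$ the inequality involves $P_2$, where $(2n-7)!!=(-3)!!$ appears, and at small $n$ the factor $2n-7$ changes sign. So I will compute $P_2,P_3,P_4$ directly from the counts in Section~\ref{section:3} (using $(-1)!!=1$, $(-3)!!=-1$, or better, counting in $W(D_2),W(D_3)$ by hand) and verify $P_3^2\le P_2P_4$ numerically.

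The main obstacle is that the inequality is tight to leading order, so the expansion has heavy cancellation. The first factor behaves like $\frac{n(2n-5)}{(n+1)(2n-7)}\approx 1+\frac{7}{2n^2}$, while $\frac{Q(n+1)Q(n-1)}{Q(n)^2}\approx\bigl(1-\frac1{n^2}\bigr)^3\approx 1-\frac{3}{n^2}$. Their product is therefore $1+\frac{1}{2n^2}+O(n^{-3})$. After expansion the top coefficients of $D$ cancel and the surviving leading term is only of size $\tfrac12\cdot 20^2\,n^{6}$. The lower-order terms must be controlled carefully; this is where I expect to spend the effort. I will first try the shifted-coefficient positivity check, and if that fails, split off an explicit threshold $n\ge N_0$ via crude bounds and handle $3\le n<N_0$ by direct evaluation of $P_n$.
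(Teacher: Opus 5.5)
Your reduction works for every inequality the proposition actually needs. The one step that fails is your boundary check: the inequality $P_3^2\le P_2P_4$ is false. From the table, $P_3=1$ and $P_4=147/192$. Also $P_2\le 1$ because it is a proportion. In fact $P_2=1$: the elements of $W(D_2)$ are the identity, $(12)$ and $c_1c_2(12)$, which are of Type 3, and $c_1c_2$, which is of Type 1. Hence $P_2P_4\le 147/192<1=P_3^2$.

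Your own machinery agrees. At $n=3$ you have $Q(2)=-8<0<Q(3)=48$, so clearing denominators reverses the inequality, and $D(3)=2160>0$ gives $r_3<r_2$. So the proposition must be read as saying that the sequence $(P_n)_{n\ge 3}$ is log convex, i.e.\ $P_n^2\le P_{n-1}P_{n+1}$ for $n\ge 4$. This is also exactly what the paper's argument yields: its Type 2 summand has ratio $\frac{2n-5}{2(n-2)}$, which is defined and increasing only from $n=3$ on. Drop the $n=3$ case rather than trying to verify it.

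For $n\ge 4$ the shifted-coefficient test succeeds outright, so you need no fallback. Since $(n+1)(2n-7)=n(2n-5)-7$, you get $D(n)=(2n^2-5n)\bigl(Q(n+1)Q(n-1)-Q(n)^2\bigr)+7Q(n)^2$. Expanding gives
\[
D(n)=400n^6-4200n^5+18355n^4-42000n^3+51625n^2-30840n+6300,
\]
\[
D(m+4)=5\,(80m^6+1080m^5+6071m^4+18336m^3+31541m^2+29328m+11484).
\]
All coefficients are positive. Together with $Q(n-1),Q(n)>0$ for $n\ge 4$, this proves the claim. There are two harmless slips. The bracket you display equals $Q(n)$ itself, not $2Q(n)$; your formula for $P_n$ is nevertheless correct. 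The leading term of $D$ is $20^2n^6$, not half of it, because $(n+1)(2n-7)\approx 2n^2$.

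Your route differs from the paper's. The paper writes $P_n$ as the sum of the three Type contributions, whose successive ratios $1-\frac{1}{2(n+1)}$, $1-\frac{1}{2(n-1)}$, $1-\frac{1}{2(n-2)}$ are visibly increasing. It then uses the fact that a sum of positive log-convex sequences is log convex, which is an AM--GM step. This avoids the cancellation you anticipated altogether. Your brute-force version costs a degree-six expansion, but it produces the exact gap
\[
r_n-r_{n-1}=\frac{D(n)}{2n(n+1)Q(n)Q(n-1)}\sim\frac{1}{2n^2},
\]
which the summand-by-summand argument does not directly provide.
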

\begin{proof}

Recall that
\begin{align*}
    P_n &= \frac{(2n-1)!!+\binom n2(2n-5)!!+ 2n\binom{n-1}{2}(2n-7)!!}{2^{n-1}n!} \\
    &= \frac{(2n-1)!!}{2^{n-1}n!} + \frac{\binom n2(2n-5)!!}{2^{n-1}n!} + \frac{2n\binom{n-1}{2}(2n-7)!!}{2^{n-1}n!}
\end{align*}
So we can express our sequence as a sum of three sequences $P_n = Q_n + R_n + S_n$. It is known that the sum of log convex sequences is also log convex \cite{Liu}(see proposition 2.1). Thus, it suffices to check that each component sequence is log convex. 

We compute 
 \[ \frac{Q_{n+1}}{Q_n} =\frac{(2n+1)!!}{2^n(n+1)!}
\cdot
\frac{2^{n-1}n!}{(2n-1)!!}
=
\frac{2n+1}{2(n+1)}
=
1-\frac{1}{2(n+1)}.\]
As $n$ gets larger, this sequence increases. Therefore, $Q_n$ is log convex.

Now we compute the following:

\begin{align*}
    \frac{R_{n+1}}{R_n} &=
\frac{\binom{n+1}{2}(2n-3)!!}{2^n(n+1)!}
\cdot
\frac{2^{n-1}n!}{\binom n2(2n-5)!!}. 
\\
&= \frac{2n-3}{2(n-1)} 
=1-\frac{1}{2(n-1)}.
\end{align*}
As before, this sequence increases as $n$ increases. Therefore, $R_n$ is log convex for $n \geq 2$. 

Similarly, we compute
\begin{align*}
\frac{S_{n+1}}{S_n}
&= \frac{2(n+1)\binom n2(2n-5)!!}{2^n(n+1)!} \cdot
\frac{2^{n-1}n!}{2n\binom{n-1}{2}(2n-7)!!}\\
&= \frac{2n-5}{2(n-2)} = 1-\frac{1}{2(n-2)}.
\end{align*}
This sequence is also increasing as $n$ increases for $n \geq 3$, implying that $S_n$ is log convex for $n \geq 3$. We deduce the summed sequence $P_n$ is log convex for $n \geq 3$. 

\end{proof}

\section{Future Work}

One potential direction for future work is to make the error terms in the Chebotarev density arguments effective. In this paper, we use Chebotarev
density to show that Frobenius conjugacy classes become equidistributed in
$W(D_n)$, with error term of order $O(q^{-1/2})$ in fixed characteristic
and $O(N(\mathfrak p)^{-1/2})$ in mixed characteristic.  However, the
constants implicit in these estimates are not computed explicitly.

Determining these constants would require a more detailed analysis of the geometry of the relevant parameter spaces and their associated $W(D_n)$-covers. One would need to control the Betti numbers
or other cohomological invariants. It could be interesting to study some of the other potential parameter spaces discussed in \cite{Hassett-Hernandez} as they could provide sharper error estimates.   

This would give a quantitative estimate for how quickly the observed proportion of rational conic bundles approaches the limiting proportion $P_n$. This would be useful for comparing the asymptotic prediction with explicit computations over finite fields.

\section*{Data and Tabulations} 

\begin{table}[ht] 
\centering
\caption{Counts and proportions of elements satisfying $(*)$.}
\label{tab:my_table}
\small
\begin{tabular}{|c|r|r|c|}
\hline
\textbf{$n$} & \textbf{$|W(D_n)|$} & \textbf{\# satisfying $(*)$} & \textbf{Proportion} \\
\hline
3 & 24 & 24 & 1 \\ \hline
4 & 192 & 147 & 0.765625 \\ \hline
5 & 1920 & 1275 & 0.6640625 \\ \hline
6 & 23040 & 13770 & 0.59765625 \\ \hline
7 & 322560 & 177030 & 0.548828125 \\ \hline
8 & 5160960 & 2635605 & 0.5106811523 \\ \hline
9 & 92897280 & 44563365 & 0.4797058105 \\ \hline
10 & 1857945600 & 843242400 & 0.4538574219 \\ \hline
11 & 40874803200 & 17651333700 & 0.4318389893 \\ \hline
12 & 980995276800 & 404932703175 & 0.4127774239 \\ \hline
13 & 25505877196800 & 10101814898175 & 0.396058321 \\ \hline
14 & 714164561510400 & 272263848006150 & 0.3812340498 \\ \hline
\end{tabular}
\label{tab:table}
\end{table}

\begin{figure} 
    \centering
    \includegraphics[width=0.8\linewidth]{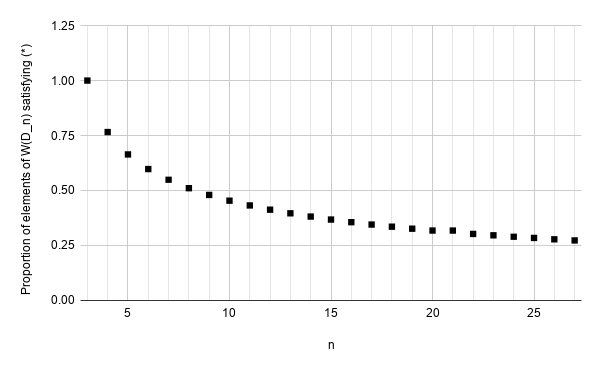}
    \caption{Proportion of elements in $W(D_n)$ satisfying $(*)$}
    \label{fig:fig1}
\end{figure}

\clearpage

\bibliography{biblio.bib}
\bibliographystyle{alpha}

\end{document}